\documentclass[a4paper,12pt]{amsart}

\usepackage{fullpage}

\makeindex

\usepackage{xcolor}
\usepackage[
colorlinks=true,
citecolor=blue,
urlcolor=blue,
linkcolor=blue,
pdfborder={0 0 0},
breaklinks
]{hyperref}

\usepackage{pifont}
\usepackage{amsfonts,graphics,amsmath,amsthm,amscd,amssymb,amsmath,latexsym,euscript, enumerate,kotex,mathtools}
\usepackage{epsfig,url}
\usepackage{flafter}
\usepackage[all,cmtip,line]{xy}
\usepackage{array}
\usepackage[english]{babel}
\usepackage{overpic}
\usepackage{subfig}
\usepackage{multirow}
\usepackage{tikz-cd}
\usepackage[T1]{fontenc}
\usepackage{lmodern}
\usepackage{pifont}

\usepackage{microtype}
\usepackage{wrapfig}
\usepackage[shortlabels]{enumitem}
\setlist[enumerate, 1]{1\textsuperscript{o}}

\newtheorem{theorem}{Theorem}[section]
\newtheorem{lemma}[theorem]{Lemma}

\newtheorem{question}[theorem]{Question}
\newtheorem{corollary}[theorem]{Corollary}

\theoremstyle{definition} 
\newtheorem{definition}[theorem]{Definition}
\newtheorem{definition-lemma}[theorem]{Definition-Lemma}

\newtheorem{example}[theorem]{Example}

\theoremstyle{remark}
\newtheorem{remark}[theorem]{Remark}

\numberwithin{equation}{section}

\newcommand{\R}{\mathbb{R}}
\newcommand{\Z}{\mathbb{Z}}

\newcommand{\Q}{\mathbb{Q}}

\newcommand{\A}{\mathbb{A}}

\newcommand{\whitestar}{\mathrel{\text{\ding{73}}}}

\def\Spec{\operatorname{Spec}}

\def\Supp{\operatorname{Supp}}

\newcommand{\floor}[1]{\left\lfloor #1 \right\rfloor}
\newcommand{\ceil}[1]{\left\lceil #1 \right \rceil}
\let\oldframe\frame
\renewcommand\frame[1][allowframebreaks]{\oldframe[#1]}

\title[On the existence of minimizer on a log Fano cone singularity]{On the existence of minimizer on a log Fano cone singularity}

\date{\today}
\subjclass[2010]{14E05}
\keywords{}

\begin{document}

\author[D.~Kim]{Donghyeon Kim}
\address[Donghyeon Kim]{Department of Mathematics, Yonsei University, 50 Yonsei-ro, Seodaemun-gu, Seoul 03722, Republic of Korea}
\email{narimial0@gmail.com}

\begin{abstract}
We prove that if $x\in (X,\Delta,\mathbb{T})$ is a log Fano cone singularity over an uncountable algebraically closed field, and $\nu_0$ is a $\mathbb{T}$-invariant valuation with center $x$ and $A_{X,\Delta}(\nu_0)<\infty$, then the value
$$ \delta(X,\Delta;\nu_0):=\inf_{\nu\in \mathrm{Val}^{\mathbb{T},*}_{X,\ni x}}\frac{A_{X,\Delta}(\nu)}{S(\nu_0;\nu)}$$
admits a minimum. The proof uses the generic limit argument. Note that there is a counterexample if we discard the log Fano cone structure.
\end{abstract}

\maketitle
\allowdisplaybreaks

\section{Introduction}
In this paper, every field is characteristic $0$.

\smallskip

The problem of the existence of a minimizer of an invariant is important in valuation theory because valuation theory provides a natural framework for attaining the minimum. In \cite{JM12}, Jonsson and Mustață stated and proved the existence of a minimizer of a log canonical threshold of a graded sequence of ideals; in \cite{Blu18}, Blum proved the existence of a minimizer of normalized local volume; and in \cite{BJ20}, they proved the existence for $\alpha$- and $\delta$-invariants. In \cite{CJKL25}, it is proved that there exists a minimizer of $\mathrm{lct}_{\sigma}(X,\Delta,D)$ for a klt pair $(X,\Delta)$ and a pseudo-effective $\Q$-Cartier $\Q$-divisor $D$ on $X$. In the theory of $K$-stability, finding a minimizer can be regarded as part of the stable degeneration theorem.

\smallskip

In this paper, we are dealing with so-called ``local $\delta$-invariant'' $\delta(X,\Delta;\nu_0)$, which is first defined in \cite{XZ20}. For $\delta(X,\Delta;\nu_0)\ge 1$, they showed that $\nu_0$ is a minimizer of $\delta(X,\Delta;\nu_0)$; however, $\delta(X,\Delta;\nu_0)<1$, it is hard to expect that some $\nu\in \mathrm{Val}_{X,x}$ would minimize $\delta(X,\Delta;\nu_0)$. Hence, we include any valuation $\mathrm{Val}_{X,\ni x}$ whose center is specialized to $x$.

\smallskip

We are interested in an analog of the local $\delta$-invariant for a log Fano cone singularity $x\in (X,\Delta,\mathbb{T})$. A log Fano cone singularity is a local analog of a log Fano pair. For example, if $(X,\Delta)$ is log Fano, then the affine cone
$$ C:=\Spec \bigoplus_{m\ge 0}H^0(X,\floor{-m(K_X+\Delta)}) $$
admits a log Fano cone singularity. An advantage of dealing with log Fano cone singularities is that there is an abundant theory on local K-stability that uses log Fano cone singularities as a language (cf. \cite{Li17, LX18, LX20, Zhu24a, Zhu24, XZ25, LW24, Wan25}).

\smallskip

The aim of this paper is to extend the definition of $\delta(X,\Delta;\xi)$ in \cite{Hua22, LW24, Wan25} by replacing $\mathrm{wt}_{\xi}$ with $\mathbb{T}$-invariant valuations $\nu_0$ and exploring the minimum attainment property of $\delta(X,\Delta;\nu_0)$; we use $\mathrm{Val}^{\mathbb{T},*}_{X,\ni x}$ in Definition \ref{definition} instead of $\mathrm{Val}^{\mathbb{T},*}_{X,x}$ (cf. Remark \ref{pohang}), and the definitions are equivalent because of Corollary \ref{sudosan}. The main theorem of this paper is the following:

\begin{theorem} \label{누영이}
Let $x\in (X,\Delta;\mathbb{T})$ be a $n$-dimensional log Fano cone singularity over an uncountable algebraically closed field $k$. Let $\nu_0\in \mathrm{Val}^{\mathbb{T},*}_{X,x}$ be a $\mathbb{T}$-invariant valuation with $A_{X,\Delta}(\nu_0)<\infty$.

\smallskip

Then, there exists a $\mathbb{T}$-invariant quasi-monomial valuation $\nu'\in \mathrm{Val}^{\mathbb{T}}_{X,\ni x}$ that is minimizing $\delta(X,\Delta;\nu_0)$.
\end{theorem}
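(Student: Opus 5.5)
We follow the generic limit strategy of \cite{Blu18} and \cite{BJ20}. Set $\delta:=\delta(X,\Delta;\nu_0)$ and choose a minimizing sequence $\nu_i\in \mathrm{Val}^{\mathbb{T},*}_{X,\ni x}$ with $A_{X,\Delta}(\nu_i)/S(\nu_0;\nu_i)\to \delta$. Both $A_{X,\Delta}$ and $S(\nu_0;\cdot)$ are homogeneous of degree one, so we may normalize $S(\nu_0;\nu_i)=1$. Then $A_{X,\Delta}(\nu_i)$ is bounded above. The first step is a properness statement for $\mathbb{T}$-invariant valuations on the log Fano cone. We want $\nu_i(\mathfrak m_x)\ge c>0$ uniformly, equivalently an Izumi-type comparison $S(\nu_0;\nu)\le C\,A_{X,\Delta}(\nu)$ that holds uniformly.

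To get this, we compare $\nu$ with a Reeb-type weight $\mathrm{wt}_\xi$. Every $\mathbb{T}$-invariant valuation is determined by its values on the weight pieces $R_\alpha$, and $\nu_0\le C_0\,\mathrm{wt}_\xi$ on homogeneous elements. This reduces $S(\nu_0;\cdot)$ to something controlled by $S(\mathrm{wt}_\xi;\cdot)$. We then use the known bound $\delta(X,\Delta;\xi)>0$ from \cite{Hua22, LW24}, or Izumi together with the fact that $A_{X,\Delta}(\nu_0)<\infty$. Corollary \ref{sudosan} lets us replace $\mathrm{Val}_{X,\ni x}$ with centers specializing to $x$ wherever convenient.

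Next, for each $i$ and each $m$ we form the graded ideal sequences $\mathfrak a_{m}(\nu_i)$. Because the $\nu_i$ are $\mathbb{T}$-invariant, these are $\mathbb{T}$-homogeneous ideals. After the normalization they contain a uniform power $\mathfrak m_x^{\lceil m/c\rceil}$, so for fixed $m$ they are parametrized by a finite type Hilbert scheme of $\mathbb{T}$-invariant ideals. Since $k$ is uncountable, we can take a generic limit, as in \cite{Blu18}. This means passing to a field extension $K\supset k$, with a $\mathbb{T}$-invariant graded sequence $\tilde{\mathfrak a}_\bullet$ on $X_K$ such that
\[
\mathrm{lct}(X_K,\Delta_K;\tilde{\mathfrak a}_\bullet)\le \liminf_i \mathrm{lct}(\mathfrak a_\bullet(\nu_i))\le \liminf_i A_{X,\Delta}(\nu_i),
\]
by lower semicontinuity of lct in families. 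On the other side we need the matching inequality for $S$. Concretely, $S(\nu_0;\cdot)$ should be computed from the codimensions $\dim R_\alpha/(\mathfrak a_m\cap R_\alpha)$ weighted by $\nu_0$-filtration jumps. On each weight piece this is a finite-dimensional linear-algebra quantity, so it should be constant on strata and pass to the generic limit: $S(\nu_{0,K};\tilde{\mathfrak a}_\bullet)\ge \limsup_i S(\nu_0;\nu_i)=1$.

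We then take a $\mathbb{T}$-invariant quasi-monomial valuation $\tilde\nu$ computing $\mathrm{lct}(\tilde{\mathfrak a}_\bullet)$, which exists by \cite{JM12} and can be chosen $\mathbb{T}$-invariant by torus averaging or degeneration. Normalizing $\tilde\nu(\tilde{\mathfrak a}_\bullet)=1$ gives $A(\tilde\nu)\le \delta$ and $S(\nu_0;\tilde\nu)\ge 1$. Finally we descend $\tilde\nu$ from $X_K$ to a $\mathbb{T}$-invariant quasi-monomial valuation $\nu'$ on $X$ with the same $A$ and $S$; the restriction argument of \cite{Blu18} preserves quasi-monomiality. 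Its center contains $x$ in its closure, so $\nu'\in\mathrm{Val}^{\mathbb{T}}_{X,\ni x}$, and it attains $\delta$.

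The main obstacle is the first step together with semicontinuity of $S(\nu_0;\cdot)$. For a general $\nu_0$, not a Reeb weight, $S$ is a limit of jumping-number sums over an infinite graded ring, so the generic-limit inequality and the uniform bound both need the cone structure. Without it the statement fails, as the abstract notes. The plan is to prove uniform convergence of the approximations $S_m\to S$ on the normalized minimizing family, with the convergence rate controlled through $\mathrm{wt}_\xi$.
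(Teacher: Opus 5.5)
There is a genuine gap, and it sits exactly where the cone structure has to be used.

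\textbf{The uniform lower bound fails.} Your first step asks for a uniform bound $\nu_i(\mathfrak m_x)\ge c>0$ and calls it equivalent to $S(\nu_0;\nu)\le C\,A_{X,\Delta}(\nu)$. These are not equivalent. The second inequality is true: it is essentially (\ref{izumi}), coming from the Izumi-type upper bound $\nu\le c_1A_{X,\Delta}(\nu)\,\mathrm{ord}_x$. But it bounds $\nu$ from above and gives no lower bound on $\nu(\mathfrak m_x)$.

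The lower bound itself cannot hold in general. By Remark \ref{pohang}, the infimum may be attained only by valuations not centered at $x$. Yet a minimizing sequence with $S(\nu_0;\nu_i)=1$ and $\nu_i(\mathfrak m_x)\ge c$ would, through your own construction, force the limit valuation to satisfy $\nu'(\mathfrak m_x)\ge c$. Consequently the ideals $\mathfrak a_m(\nu_i)$ need not be $\mathfrak m_x$-primary. They contain no fixed power of $\mathfrak m_x$, and no finite-type Hilbert scheme parametrizes them.

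The paper gets around this as follows:
\begin{itemize}
\item It records only the truncations $(\mathfrak a_p(\nu_i)+\mathfrak a_N(\nu_0))/\mathfrak a_N(\nu_0)$ inside the finite-dimensional spaces $R/\mathfrak a_N(\nu_0)$.
\item These are parametrized by products of $\mathbb{T}$-invariant Grassmannians, cut out by closed conditions (ideal, decreasing, multiplicative, compatible in $N$).
\item The generic limit is then only a filtration of $\widehat R=\prod_\alpha R_\alpha$. The weight decomposition is what lets one descend it to $R$, via $\mathcal F^p_{\mathbb Q}=\bigoplus_\alpha(\widehat{\mathcal F}^p\cap R_\alpha)$.
\end{itemize}
That descent of a formal limit is where the log Fano cone structure is really used, and it is exactly what fails in Example \ref{chatchat}. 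Your plan places the role of the cone structure elsewhere and never performs this step.

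\textbf{The $S$-comparison.} $S(\nu_0;\cdot)$ is a $\limsup$ over all $q$ of colengths built from infinitely many ideals. It is therefore not ``constant on strata'' of any finite-type parameter space. The paper proceeds in three moves:
\begin{itemize}
\item It first passes to $\overline S_m$, which involves only powers $\mathfrak a_m(\nu)^q$ of single ideals. The uniform error $0\le S-\overline S_m\le C A_{X,\Delta}(\nu)/m$ is Theorem \ref{theorem}, obtained from Takagi's subadditivity with a fixed element $h$, not from $\mathrm{wt}_\xi$.
\item It then compares generic and special fibers through lower semicontinuity of the rank of the $q$-fold multiplication map on the truncated data (Lemma \ref{duidui}). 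This yields an inequality (generic $\ge$ special), not constancy.
\item Finally it removes the truncation at $\mathfrak a_N(\nu_0)$.
\end{itemize}

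\textbf{Restriction from $X_K$.} Restricting $\tilde\nu$ from $X_K$ to $K(X)$ does not preserve $S$. One only has $\mathfrak a_p(\tilde\nu|_{K(X)})\otimes_k K\subseteq\mathfrak a_p(\tilde\nu)$, usually strictly. So both $A$ and $S$ may drop, and the ratio is uncontrolled.

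The paper avoids restriction entirely by descending all data to a countable algebraically closed $k_0\subseteq k$. The limit field $K_\infty$ is then countable and embeds into $k$ over $k_0$. This, not the existence of generic limits, is where uncountability of $k$ enters. The limit filtration $\mathcal F^\bullet$ therefore lives on $R$ itself, and Lemma \ref{hevensteelyard} produces the $\mathbb{T}$-invariant quasi-monomial $\nu'$ directly.
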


Note that the result is proven in \cite[Theorem 5.0.4]{Hua22} in the case of $\nu_0=\mathrm{wt}_{\xi}$ for some Reeb vector $\xi$.

\smallskip

Our approach uses the generic limit argument (\cite{dFM09, dFEM10, Blu18, BJ20}), and in particular, we follow the approach of \cite{BJ20}. A challenge in adapting the generic limit argument from \cite{BJ20} is that, in our setting, there is no counterpart to \cite[Proposition 6.3]{BJ20}. To overcome this, we first descend all data associated with $(X,\Delta,\mathbb{T})$ and $\nu_0$ to a countable algebraically closed subfield $k_0\subseteq k$, and then apply Lemma \ref{duidui} to obtain an analog of \cite[Proposition 6.7]{BJ20} (or \cite[Lemma 3.21]{BLXZ25}). The argument is already presented in \cite[Theorem 4.1]{Tot12} and \cite[Proposition 3.9]{LX20}.

\smallskip

The condition of being a log Fano cone singularity is crucial: there is a counterexample when we do not consider the log Fano cone structure, even when $\dim X=2$ (cf. Question \ref{chatchatchat}). Let us explain why our generic limit argument cannot work for the general, non log Fano cone case. Intuitively, the generic limit argument gives a formal filtration on $\widehat{R}$, and without the condition that $x\in (X,\Delta,\mathbb{T})$ is a log Fano cone singularity, we cannot descend the formal filtration to $R$. Moreover, this counterexample suggests that it may be quite difficult to adapt the approach based on Birkar’s boundedness of complements (see \cite{Bir19}) to establish Theorem \ref{누영이}, in the same way as in \cite{Xu20, BLX22, BLXZ23}.

\smallskip

Section \ref{2} gives the basic notations and definitions we need, and Section \ref{3} proves the main theorem. Section \ref{4} explains why Theorem \ref{누영이} is false when we discard the log Fano cone structure and proposes two questions.

\section*{Acknowledgements}
The author is partially supported by Samsung Science and Technology Foundation under Project Number SSTF-BA2302-03. The author thanks Sung Rak Choi for identifying typographical errors in the draft of this paper.

\section*{Statement of AI use}
In proving Theorem \ref{누영이}, the author used ChatGPT 5.6 Sol to search the literature and to check and refine the main argument developed by the author. The main argument of the proof of Theorem \ref{누영이} is due to the author. In contrast, the counterexample in Section \ref{4} was found and initially constructed with substantial assistance from ChatGPT 5.6 Sol Ultra during discussions with the author. The author has independently verified the counterexample and all mathematical claims in the manuscript and takes full responsibility for their correctness.

\section{Preliminaries} \label{2}

In this paper, \emph{variety} means reduced, irreducible, separated, and finite type scheme over an uncountable algebraically closed field $k$. Any $\Q$-divisor is $\Q$-Weil unless otherwise stated. Let us collect the basic notions and definitions which will be used in the paper. For more details, see \cite{KM98} or \cite{Fuj17}.

\begin{itemize}
\item Given a variety $X$ and an effective $\Q$-divisor $\Delta$ on it, we say that $(X,\Delta)$ is a \emph{couple}. A couple $(X,\Delta)$ is a \emph{pair} when $K_X+\Delta$ is $\Q$-Cartier.
\item Let us say that a projective morphism $f:X\to Y$ of normal projective varieties is a \emph{fibration} if $f_*\mathcal{O}_X=\mathcal{O}_Y$.
\item Consider $X$ as a variety. A morphism $f: X'\to X$ is a \emph{resolution} if it is a proper birational morphism and $X'$ is smooth. For a pair $(X, \Delta)$, the morphism $f:X'\to X$ is a \emph{log resolution} of $(X,\Delta)$ if $f$ is a proper birational morphism, and the union $\mathrm{Exc}(f) \cup \mathrm{Supp}\, \Delta$ forms a simple normal crossing (snc) divisor on $X'$.
\item Let $(X,\Delta)$ be a pair, and let $E$ be a prime divisor over $X$. Suppose $f:X'\to X$ is a proper birational morphism from a normal variety $X'$ that has $E$ as an effective $f$-exceptional divisor. We denote by \emph{log discrepancy} $A_{X,\Delta}(E)$ of $(X,\Delta)$ along $E$, and the definition is
    $$ A_{X,\Delta}(E):=\mathrm{mult}_E(K_{X'}-f^*(K_X+\Delta))+1.$$
    Note that the definition does not depend on the choice of $f$. If $A_{X,\Delta}(E)>0\,(\ge 0)$ for every prime divisor $E$ over $X$, then $(X,\Delta)$ is \emph{Kawamata log terminal (klt)} (resp. \emph{log canonical (lc)}).
    \item Let $X\to S$ be a morphism of schemes, and let $s\in S$ be a (not necessarily closed) point. Denote by $X_s$ the fiber product $X\times_S s$, and by $X_{\overline{s}}$ the fiber product $X\times_S \overline{s}$. We say that $X_{\overline{s}}$ is a \emph{geometric fiber} of $X\to S$.
\end{itemize}

\subsection{Valuations}
Let $X$ be a scheme irreducible, separated, and essentially of finite type over an algebraically closed field $k$ of characteristic $0$. A \emph{(real) valuation} of its function field $K(X)$ is a non-constant map
$$ \nu:K(X)^{\times}\to \R $$
satisfying
\begin{itemize}
    \item $\nu(fg)=\nu(f)+\nu(g)$;
    \item $\nu(f+g)\ge \min\{\nu(f),\nu(g)\}$;
    \item $\nu(a)=0$ for every $a\in k$.
\end{itemize}
We set $\nu(0)=+\infty$. A valuation $\nu$ gives rise to a valuation ring
$$ \mathcal{O}_{\nu}:=\{f\in K(X)\mid \nu(f)\ge 0\}.$$
We say a valuation $\nu$ is centered at a scheme-theoretic point $x=c_X(\nu)\in X$ if we have a local inclusion $\mathcal{O}_{X,x}\hookrightarrow \mathcal{O}_{\nu}$ of local rings. We say that a valuation $\nu$ of $K(X)$ is \emph{valuation over $X$} if the center of $\nu$ is in $X$. For a closed point $x\in X$, we denote by $\mathrm{Val}_{X,x}$ (resp. $\mathrm{Val}_{X,\ni x}$) the set of valuations $\nu$ over $X$ centered at $x\in X$ (resp. the set of valuations over $X$ with $x\in \overline{\{c_X(\nu)\}}$).

\smallskip

Let $X:=\Spec R$ be affine. For each valuation $\nu\in \mathrm{Val}_{X}$ and any non-negative real number $\lambda$, we define the \emph{valuative ideal}
$$ \mathfrak{a}_{\lambda}(\nu):=\{f\in R\mid \nu(f)\ge \lambda\}.$$
Note that this definition is nonstandard: in many papers, the valuative ideal is defined as
$$ \mathfrak{a}_{\lambda}(\nu):=\{f\in \mathcal{O}_{X,c_X(\nu)}\mid \nu(f)\ge \lambda\}. $$
 This definition makes sense for any scheme $X$ (not necessarily affine). We adopt a different, nonstandard definition here in order to simplify our proof of Theorem \ref{누영이}. We think \cite{LX20} also uses the nonstandard definition of valuative ideals.

\smallskip

Given a nonzero ideal $\mathfrak{a}\subseteq \mathcal{O}_X$, we may evaluate $\mathfrak{a}$ along $\nu$ by setting
$$ \nu(\mathfrak{a}):=\min\{\nu(f)\mid f\in \mathfrak{a}\cdot \mathcal{O}_{X,c_X(\nu)}\}. $$
We endow $\mathrm{Val}_X$ with the weakest topology such that, for every nonzero ideal $\mathfrak{a}\subseteq \mathcal{O}_X$, the map $\mathrm{Val}_X\to \R$ defined by $\nu\mapsto \nu(\mathfrak{a})$ is continuous. The subset $\mathrm{Val}_{X,x},\mathrm{Val}_{X,\ni x}\subseteq \mathrm{Val}_X$ is endowed with the subspace topology.

\smallskip

A model $(X',E)$ over a pair $(X,\Delta)$ consists of a projective birational morphism $\pi:X'\to X$ and a reduced divisor $E$ on $X'$. We call it a \emph{log smooth model} if $(X',\Supp(E+\mathrm{Exc}(\pi)+\pi^{-1}_*\Delta))$ is simple normal crossing.

\smallskip

A valuation $\nu:=\nu_{\alpha,\eta}$ over $X$ is called \emph{quasi-monomial} if there exists a log smooth model $(X',E:=E_1+\cdots+E_r)\to X$, a nonnegative number $r$ and a tuple $\alpha=(\alpha_1,\cdots,\alpha_r)$ such that
\begin{itemize}
    \item[(1)] $\bigcap^r_{i=1}E_i\ne \varnothing$;
    \item[(2)] There exists a component $C\subseteq \bigcap^r_{i=1}E_i$, such that around the generic point $\eta$ of $C$, $E_i$ is given by the local equation $(z_i=0)$ and
    \item[(3)] for any $f\in \mathcal{O}_{X',\eta}$, we may write $f=\sum_{\beta\in \Z^r_{\ge 0}}c_{\beta}z^{\beta}$ (where $z^{\beta}:=z^{\beta_1}_1\cdots z^{\beta_r}_r$ in $\widehat{\mathcal{O}_{X',\eta}}$ so that each $c_{\beta}$ is either zero or a unit), then
    $$ \nu(f)=\min\left\{\sum \alpha_i\beta_i\mid c_{\beta}\ne 0\right\}.$$
\end{itemize}
We define
$$ \mathrm{QM}_{\eta}(X',E):=\left\{\nu_{\alpha,\eta}\mid \alpha\in \R^r_{\ge 0}\right\}  $$
and
$$ \mathrm{QM}(X',E):=\bigcup_{\eta \in \bigcap^r_{i=1} E_i}\mathrm{QM}_{\eta}(X',E), $$
where $\eta$ runs through the generic points of $\bigcap^r_{i=1}E_i$.

\smallskip

For a quasi-monomial valuation $\nu_{(\alpha_1,\cdots,\alpha_r),\eta}$ with a log smooth model $(X',E=E_1+\cdots+E_r)\to (X,\Delta)$, we define
$$ A_{X,\Delta}(\nu_{(\alpha_1,\cdots,\alpha_r),\eta}):=\sum \alpha_j A_{X,\Delta}(E_j).$$
For every log smooth model $(X',E)$ over $X$, we can define the \emph{retraction map} $r_{(X',E)}:\mathrm{Val}_X\to \mathrm{QM}(X',E)$ as in \cite[4.3]{JM12}, and define
$$ A_{X,\Delta}(\nu):=\sup_{(X',E)}A_{X,\Delta}(r_{(X',E)}(\nu))\text{ for }\nu\in \mathrm{Val}_X,$$
where the supremum is taken over all log smooth pairs $(X',E)$ over $(X,\Delta)$.

\subsection{Multiplier ideal sheaf}
In this subsection, we recall the definition of \emph{multiplier ideal sheaf}.

\smallskip

Let $(X,\Delta)$ be a pair, $\mathfrak{a}$ be an ideal in $\mathcal{O}_X$, and let $f:X'\to X$ be a log resolution of $(X,\Delta,\mathfrak{a})$. Let $F$ be an effective Cartier divisor on $X'$ such that $\mathfrak{a}\mathcal{O}_{X'}=\mathcal{O}_{X'}(-F)$. Let $\lambda\ge 0$. We define the \emph{multiplier ideal sheaf} $\mathcal{J}(X,\Delta,\mathfrak{a}^{\lambda})$ as
$$ \mathcal{J}(X,\Delta,\mathfrak{a}^{\lambda})=f_*\mathcal{O}_{X'}(\ceil{K_{X'}-f^*(K_X+\Delta)-\lambda E}).$$
We define the \emph{log canonical threshold} as
$$ \mathrm{lct}(X,\Delta,\mathfrak{a}):=\inf\{\lambda\mid \mathcal{J}(X,\Delta,\mathfrak{a}^{\lambda})\ne \mathcal{O}_X\}.$$
It is well-known
$$ \mathrm{lct}(X,\Delta,\mathfrak{a})=\inf_{\nu\in \mathrm{Val}^*_X}\frac{A_{X,\Delta}(\nu)}{\nu(\mathfrak{a})}.$$
(cf. \cite[Corollary 6.9]{JM12}, \cite[Lemma 3.4]{Kim26}, and \cite[Theorem 3.7]{CJKL25}) Moreover, for a graded sequence of ideals $\mathfrak{a}_{\bullet}$ in $\mathcal{O}_X$, we can define
$$ \mathcal{J}(X,\Delta,\mathfrak{a}^{\lambda}_{\bullet}):=\bigcup_{m}\mathcal{J}(X,\Delta,\mathfrak{a}^{\frac{\lambda}{m}}_m)$$
for a sufficiently divisible $m$, and
$$ \mathrm{lct}(X,\Delta,\mathfrak{a}_{\bullet}):=\sup_{m\ge 1}m\cdot \mathrm{lct}(X,\Delta,\mathfrak{a}_m) $$
for a graded sequence of ideals $\mathfrak{a}_{\bullet}$ in $\mathcal{O}_X$. Note that
$$ \mathrm{lct}(X,\Delta,\mathfrak{a}_{\bullet})=\inf_{\nu\in \mathrm{Val}^*_X}\frac{A_{X,\Delta}(\nu)}{\nu(\mathfrak{a}_{\bullet})},$$
where $\nu(\mathfrak{a}_{\bullet})=\inf_m\frac{\nu(\mathfrak{a}_m)}{m}$ (cf. \cite[Chapter 1]{Xu25}).



\subsection{{Local $S$-invariant}}
In this subsection, we extend the definition of \emph{local $S$-invariant} in \cite[Lemma-Definition 3.1]{XZ20} to every $\nu\in \mathrm{Val}_{X,\ni x}$.

\begin{definition} \label{yongheungdong}
Let $x\in (X:=\Spec R,\Delta)$ be an $n$-dimensional klt singularity, fix $\nu_0\in \mathrm{Val}^*_{X,x}$ with $A_{X,\Delta}(\nu_0)<\infty$, and let $\nu\in \mathrm{Val}^*_{X,\ni x}$.

\smallskip

Define
$$ \mathrm{vol}(\nu_0;\nu):=\limsup_{\substack{r\to \infty \\ r\in \R_{>0}}}\frac{\ell_R\left(\frac{\mathfrak{a}_{r}(\nu)+\mathfrak{a}_r(\nu_0)}{\mathfrak{a}_r(\nu_0)}\right)}{\frac{r^n}{n!}},$$
and
$$ S(\nu_0;\nu):=\frac{n+1}{n}\frac{A_{X,\Delta}(\nu_0)}{\mathrm{vol}(\nu_0)}\int^{\infty}_0\mathrm{vol}\left(\nu_0;\frac{\nu}{t}\right)\,\mathrm{d}t.$$
\end{definition}

Unlike the case of $\nu\in \mathrm{Val}_{X,x}$ (cf. \cite[Lemma-Definition 3.1]{XZ20}), we do not know whether the limit exists in general. Hence, we use the limit supremum of real numbers.

\begin{lemma} \label{3.2}
For all $p>0$ and all $t\ge 0$,
$$ \mathrm{vol}\left(\nu_0;\frac{\nu}{t}\right)=\limsup_{\substack{q\to \infty \\ q\in \Z_{>0}}}\frac{\ell_R\left(\frac{\mathfrak{a}_{pqt}(\nu)+\mathfrak{a}_{pq}(\nu_0)}{\mathfrak{a}_{pq}(\nu_0)}\right)}{\frac{(pq)^n}{n!}}.$$
\end{lemma}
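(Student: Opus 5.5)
The plan is to compare the limit superior over all real $r\to\infty$ with the one over the discrete sequence $r=pq$, $q\in\Z_{>0}$. First I would unwind the definition. Since $\mathfrak{a}_r(\nu/t)=\{f\mid \nu(f)\ge rt\}=\mathfrak{a}_{rt}(\nu)$, we have
$$\mathrm{vol}\left(\nu_0;\tfrac{\nu}{t}\right)=\limsup_{r\to\infty}\frac{L(r)}{r^n/n!},\qquad L(r):=\ell_R\left(\frac{\mathfrak{a}_{rt}(\nu)+\mathfrak{a}_r(\nu_0)}{\mathfrak{a}_r(\nu_0)}\right).$$
The sequence $r=pq$ is a subset of $\R_{>0}$ tending to infinity. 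Hence the right-hand side of Lemma \ref{3.2} is at most the left-hand side, and only the reverse inequality needs proof.

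For the reverse inequality, take a real $r\gg 0$ and set $q=\floor{r/p}$ and $r''=pq$, so that $r-p<r''\le r$. Since $rt\ge r''t$ and $r\ge r''$, we have $\mathfrak{a}_{rt}(\nu)\subseteq \mathfrak{a}_{r''t}(\nu)$ and $\mathfrak{a}_r(\nu_0)\subseteq\mathfrak{a}_{r''}(\nu_0)$. The first inclusion gives
$$L(r)\le \ell_R\left(\frac{\mathfrak{a}_{r''t}(\nu)+\mathfrak{a}_r(\nu_0)}{\mathfrak{a}_r(\nu_0)}\right).$$
Next I would use the filtration $\mathfrak{a}_r(\nu_0)\subseteq \mathfrak{a}_{r''t}(\nu)+\mathfrak{a}_r(\nu_0)\subseteq \mathfrak{a}_{r''t}(\nu)+\mathfrak{a}_{r''}(\nu_0)$ and additivity of length. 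The right-hand side above is the length of the bottom step, which is the length of the whole quotient minus that of the top step. The top step is a quotient of $\mathfrak{a}_{r''}(\nu_0)/\mathfrak{a}_r(\nu_0)$, and the whole quotient has length $L(r'')+\ell_R(\mathfrak{a}_{r''}(\nu_0)/\mathfrak{a}_r(\nu_0))$. Therefore
$$L(r)\le L(r'')+\ell_R\left(\mathfrak{a}_{r''}(\nu_0)/\mathfrak{a}_r(\nu_0)\right)=L(r'')+\ell_R(R/\mathfrak{a}_r(\nu_0))-\ell_R(R/\mathfrak{a}_{r''}(\nu_0)).$$
All these lengths are finite because $\nu_0$ is centered at the closed point $x$.

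The key input, and the step I expect to be the main obstacle, is controlling the error term. Since $\nu_0\in\mathrm{Val}^*_{X,x}$ with $A_{X,\Delta}(\nu_0)<\infty$, the limit $\ell_R(R/\mathfrak{a}_r(\nu_0))/(r^n/n!)\to\mathrm{vol}(\nu_0)$ exists as $r\to\infty$ through the reals, by the standard existence of volumes of graded sequences of $\mathfrak{m}_x$-primary ideals (Ein--Lazarsfeld--Smith, Cutkosky; cf.\ \cite{XZ20}). This limit is the point that must be quoted carefully. Given it, $r''/r\to 1$ implies that the error term divided by $r^n/n!$ tends to $\mathrm{vol}(\nu_0)-\mathrm{vol}(\nu_0)=0$.

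Finally, divide the inequality by $r^n/n!$ and use $(r'')^n/r^n\to1$. Since $r''=pq$ with $q\to\infty$ as $r\to\infty$, this gives
$$\limsup_{r}\frac{L(r)}{r^n/n!}\le \limsup_{q}\frac{L(pq)}{(pq)^n/n!},$$
which is the desired inequality. The case $t=0$ is covered by the same argument: then $\mathfrak{a}_0(\nu)=R$, so $L(r)=\ell_R(R/\mathfrak{a}_r(\nu_0))$ and both sides equal $\mathrm{vol}(\nu_0)$.
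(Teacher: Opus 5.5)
Your proposal is correct and is essentially the paper's proof. It uses the same three steps: the easy inequality from restricting to the subsequence $r=pq$, the length estimate $L(r)\le L(pq)+\ell_R(\mathfrak{a}_{pq}(\nu_0)/\mathfrak{a}_r(\nu_0))$ with $q=\lfloor r/p\rfloor$, and the vanishing of the normalized error term because $\ell_R(R/\mathfrak{a}_r(\nu_0))/(r^n/n!)\to\mathrm{vol}(\nu_0)$. The differences are cosmetic: the paper first extracts a sequence $r_j$ realizing the $\limsup$, whereas you bound over all real $r$ at once, and you state explicitly the existence of the volume limit over real $r$ (which follows from the integer case by monotonicity), a fact the paper uses implicitly when it writes $\limsup$ minus $\liminf$ equal to $0$.
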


\begin{proof}
Note that the $\ge $ side is trivial, and hence we will only prove the $\le $ side.

\smallskip

For $0<s\le r$, $\mathfrak{a}_{rt}(\nu)\subseteq \mathfrak{a}_{st}(\nu)$, and $\mathfrak{a}_r(\nu_0)\subseteq \mathfrak{a}_s(\nu_0)$. Hence,
$$ \mathfrak{a}_{rt}(\nu)+\mathfrak{a}_r(\nu_0)\subseteq \mathfrak{a}_{st}(\nu)+\mathfrak{a}_s(\nu_0).$$
Hence,
$$ 
\begin{aligned}
\ell_R\left(\frac{\mathfrak{a}_{rt}(\nu)+\mathfrak{a}_r(\nu_0)}{\mathfrak{a}_r(\nu_0)}\right)&\le \ell_R\left(\frac{\mathfrak{a}_{st}(\nu)+\mathfrak{a}_s(\nu_0)}{\mathfrak{a}_r(\nu_0)}\right)
\\ &=\ell_R\left(\frac{\mathfrak{a}_{st}(\nu)+\mathfrak{a}_s(\nu_0)}{\mathfrak{a}_s(\nu_0)}\right)+\ell_R(\mathfrak{a}_s(\nu_0)/\mathfrak{a}_r(\nu_0)).
\end{aligned}
$$
Now, let $r_j\to \infty$ be a sequence such that
$$\mathrm{vol}\left(\nu_0;\frac{\nu}{t}\right)=\lim_{i\to \infty}\frac{\ell_R\left(\frac{\mathfrak{a}_{r_it}(\nu)+\mathfrak{a}_{r_i}(\nu_0)}{\mathfrak{a}_{r_i}(\nu_0)}\right)}{\frac{r^n_i}{n!}},$$
$q_j:=\floor{\frac{r_j}{p}}$, and $s_j=pq_j$. Then, $s_j\le r_j<s_j+p$ and $\frac{s_j}{r_j}\to 1$. Thus,
$$ \frac{\ell_R\left(\frac{\mathfrak{a}_{r_it}(\nu)+\mathfrak{a}_{r_i}(\nu_0)}{\mathfrak{a}_{r_i}(\nu_0)}\right)}{\frac{r^n_i}{n!}}\le \left(\frac{s_j}{r_j}\right)^n\frac{\ell_R\left(\frac{\mathfrak{a}_{s_it}(\nu)+\mathfrak{a}_{s_i}(\nu_0)}{\mathfrak{a}_{s_i}(\nu_0)}\right)}{\frac{s^n_i}{n!}}+\frac{\ell_R(\mathfrak{a}_{s_j}(\nu_0)/\mathfrak{a}_{r_j}(\nu_0))}{\frac{r^n_j}{n!}}.$$
The latter goes to $0$ as $j\to \infty$:
$$ 
\begin{aligned}
\limsup_{j\to \infty}\frac{\ell_R(\mathfrak{a}_{s_j}(\nu_0)/\mathfrak{a}_{r_j}(\nu_0))}{\frac{r^n_j}{n!}}&=\limsup_{j\to \infty}\frac{\ell_R(R/\mathfrak{a}_{r_j}(\nu_0))-\ell_R(R/\mathfrak{a}_{s_j}(\nu_0))}{\frac{r^n_j}{n!}}
\\ &\le \limsup_{j\to \infty}\frac{\ell_R(R/\mathfrak{a}_{r_j}(\nu_0))}{\frac{r^n_j}{n!}}-\liminf_{j\to \infty}\left(\frac{s_j}{r_j}\right)^n\frac{\ell_R(R/\mathfrak{a}_{s_j}(\nu_0))}{\frac{s^n_j}{n!}}
\\ &=0,
\end{aligned}
$$
and similarly, the $\liminf$ is $\ge 0$. Hence, by letting $i\to \infty$,
$$ \mathrm{vol}\left(\nu_0;\frac{\nu}{t}\right)\le \limsup_{i\to \infty}\frac{\ell_R\left(\frac{\mathfrak{a}_{s_it}(\nu)+\mathfrak{a}_{s_i}(\nu_0)}{\mathfrak{a}_{s_i}(\nu_0)}\right)}{\frac{s^n_i}{n!}}\le \limsup_{\substack{q\to \infty \\ q\in \R_{>0}}}\frac{\ell_R\left(\frac{\mathfrak{a}_{pqt}(\nu)+\mathfrak{a}_{pq}(\nu_0)}{\mathfrak{a}_{pq}(\nu_0)}\right)}{\frac{(pq)^n}{n!}}.$$
We complete the proof.
\end{proof}

Let us prove the following lemma.

\begin{lemma} \label{HLQ}
Let $x\in (X:=\Spec R,\Delta)$ be a klt germ, and let $f:(Y:=\Spec S,\Delta_Y)\to (X,\Delta)$ be an étale morphism. Let $y\in Y$ be a closed point with $f(y)=x$, and $\nu,\nu_0\in \mathrm{Val}_{Y,y}$ valuations.

\smallskip

Then, we obtain that
\begin{itemize}
    \item[(1)] the restriction map $\mathrm{Val}_{Y,y}\to \mathrm{Val}_{X,x}$ gives an isomorphism,
    \item[(2)] we have $A_{Y,\Delta_Y}(\nu)=A_{X,\Delta}(\nu|_{K(X)})$
    \item[(3)] under the isomorphism, $\mathrm{vol}(\nu;\nu_0)=\mathrm{vol}(\nu|_{K(X)};\nu_0|_{K(X)})$.
\end{itemize}
\end{lemma}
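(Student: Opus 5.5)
The plan is to reduce all three statements to the fact that an étale morphism induces an isomorphism of completions, $\widehat{\mathcal{O}_{X,x}}\cong\widehat{\mathcal{O}_{Y,y}}$, since $k$ is algebraically closed and so $k(y)=k(x)$. Here $\Delta_Y=f^*\Delta$ is implicit, so that $K_Y+\Delta_Y=f^*(K_X+\Delta)$.

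\textbf{Step (1).} Restriction $\nu\mapsto \nu|_{K(X)}$ is well defined because $K(X)\subseteq K(Y)$ is a finite extension, and the center of the restriction is $f(y)=x$. For bijectivity I would use that a valuation centered at a closed point is determined by its values on $\mathcal{O}_{X,x}$ (ratios). Such a valuation extends uniquely, as a semivaluation, to the completion, via $\nu(\hat g):=\nu(g)$ for any $g\equiv \hat g \bmod \mathfrak{m}^N$ with $N\gg 0$. This is legitimate because $\nu(\mathfrak{m})>0$, so $\nu(\mathfrak{m}^N)\to\infty$.

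The inverse map is then: take $\mu\in\mathrm{Val}_{X,x}$, extend it to $\widehat{\mathcal{O}_{X,x}}=\widehat{\mathcal{O}_{Y,y}}$, and restrict it to $\mathcal{O}_{Y,y}$. The main obstacle is checking that this restriction is a genuine valuation, i.e. finite on nonzero elements. Since $\mathcal{O}_{Y,y}$ is finite over $\mathcal{O}_{X,x}$ up to localization, any $h\in\mathcal{O}_{Y,y}\setminus 0$ satisfies an integral equation over $\mathcal{O}_{X,x}$ with nonzero constant term. Alternatively, $h$ divides its norm $N(h)\in \mathcal{O}_{X,x}$ after localization. Either way this bounds $\nu(h)\le \nu(N(h))<\infty$. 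Continuity, i.e. homeomorphism, follows since ideals of $\mathcal{O}_{X,x}$ generate, up to $\mathfrak m$-primary approximation, all ideals of $\mathcal{O}_{Y,y}$.

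\textbf{Step (2).} First treat divisorial and quasi-monomial valuations. Take a log resolution $X'\to X$ and base change it to $Y'=X'\times_X Y\to Y$. The map $Y'\to X'$ is étale, hence $Y'$ is a log resolution of $(Y,\Delta_Y)$ with $K_{Y'}-g^*(K_Y+\Delta_Y)$ equal to the pullback of $K_{X'}-\pi^*(K_X+\Delta)$. Étale pullback preserves snc divisors and their local coordinates at $\eta$. Quasi-monomial valuations therefore correspond, with the same weights and the same log discrepancies.

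Such base-changed models are cofinal among log smooth models over $Y$ near $y$: after completion, any model over $Y$ is dominated by one coming from $X$, because blowups of ideals descend via the identification of completions. The retractions are also compatible. Taking suprema then gives $A_{Y,\Delta_Y}(\nu)=A_{X,\Delta}(\nu|_{K(X)})$.

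\textbf{Step (3).} Both valuations are centered at the closed point, so $\mathfrak{a}_r(\nu_0)$ is $\mathfrak m$-primary. Lengths of modules of finite length supported at the point are computed in the completion. The valuative ideals also complete compatibly: $\widehat{\mathfrak{a}_r(\nu)}$ is the set of $\hat g$ with $\hat\nu(\hat g)\ge r$, the same ideal on either side. The one subtlety is the paper's nonstandard global definition on $R$ versus $S$. Since we are localizing at $y$ over $x$ and only $\mathfrak m$-primary quotients matter, I would note that
$$\frac{\mathfrak{a}_r(\nu)+\mathfrak{a}_r(\nu_0)}{\mathfrak{a}_r(\nu_0)}$$
is supported at the center and localize. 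Hence each length agrees term by term, and so do the $\limsup$'s defining $\mathrm{vol}$.
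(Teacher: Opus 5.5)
Your Step (3) is the paper's own argument: compare lengths in $\widehat{R}\cong\widehat{\mathcal{O}_{Y,y}}$, where the valuative ideals agree. For (1)--(2) the paper only cites \cite[Proposition 2.24]{HLQ23}, so you are supplying proofs it omits, and your (1) is essentially fine.

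The genuine gap is the inequality $A_{Y,\Delta_Y}(\nu)\le A_{X,\Delta}(\nu|_{K(X)})$ in Step (2). The base-change half, which gives $\ge$, is fine. You justify cofinality of base-changed models by saying blowups of ideals descend through $\widehat{\mathcal{O}_{X,x}}\cong\widehat{\mathcal{O}_{Y,y}}$, but only ideals containing a power of $\mathfrak{m}_y$ descend. A general ideal of $\mathcal{O}_{Y,y}$, or a non-exceptional boundary component of a log smooth model $(Y',E')$, need not be pulled back from $X$. Examples are the branch $(s'=0)$ of the node in Example \ref{chatchat}, or a curve through $y$ in a threefold that is only one analytic branch of its image in $X$. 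The retraction $r_{(Y',E')}$ depends on exactly such data. So you have not compared the supremum over all models of $Y$ with the supremum over base-changed ones. Cofinality may well hold, but it needs a real argument (e.g.\ Galois closure plus equivariant resolution), not descent of ideals.

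A cheaper fix avoids cofinality altogether. Take base changes $(Y'',E''_Y)$ of log smooth models $(X'',E'')$ over $X$ that log-resolve $\mathfrak{m}_x$ together with a given finite set of $\mathfrak{m}_x$-primary ideals. Then $r(\nu):=r_{(Y'',E''_Y)}(\nu)$ is centered at $y$ and $r(\nu)\le\nu$. Fix an ideal $\mathfrak{b}$ and $N$ with $N\nu(\mathfrak{m}_y)>\nu(\mathfrak{b})$. Let $\mathfrak{c}$ be the $\mathfrak{m}_x$-primary ideal with $\mathfrak{c}\mathcal{O}_{Y,y}=\mathfrak{b}\mathcal{O}_{Y,y}+\mathfrak{m}_y^N$; it exists since $\mathcal{O}_{X,x}/\mathfrak{m}_x^N\cong\mathcal{O}_{Y,y}/\mathfrak{m}_y^N$. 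As soon as the model resolves $\mathfrak{c}$,
\[
\nu(\mathfrak{b})\ \ge\ r(\nu)(\mathfrak{b})\ \ge\ r(\nu)(\mathfrak{b}+\mathfrak{m}_y^N)\ =\ \nu(\mathfrak{b}+\mathfrak{m}_y^N)\ =\ \nu(\mathfrak{b}).
\]
This is the same truncation you already use for continuity in (1). Hence these retractions converge to $\nu$. By definition $A_{Y,\Delta_Y}$ is a supremum of the continuous functions $A_{Y,\Delta_Y}\circ r_{(Y',E')}$, so it is lower semicontinuous, and therefore
\[
A_{Y,\Delta_Y}(\nu)\le\liminf A_{Y,\Delta_Y}(r(\nu))=\liminf A_{X,\Delta}\bigl(r_{(X'',E'')}(\nu|_{K(X)})\bigr)\le A_{X,\Delta}(\nu|_{K(X)}).
\]

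A minor point in (1): rely only on a non-monic relation $a_nh^n+\cdots+a_1h+a_0=0$ with $a_i\in\mathcal{O}_{X,x}$ and $a_0\neq 0$, which gives $\hat\mu(h)\le\mu(a_0)<\infty$. Take $h=1/g$ with $g(y)\neq0$ but $g$ vanishing at another point of $f^{-1}(x)$. Such an $h$ is not integral over $\mathcal{O}_{X,x}$, and its norm need not lie in $\mathcal{O}_{X,x}$, so neither the ``integral equation'' wording nor the norm variant works as stated.
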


\begin{proof}
(1) and (2) are already proven in \cite[Proposition 2.24]{HLQ23}. Note that the proof of the pair version of \cite[Proposition 2.24 (1)]{HLQ23} is almost verbatim to \cite[The proof of Proposition 2.24 (1)]{HLQ23}. For (2), let $\widehat{R}$ be the completion of $R$ along $\mathfrak{m}_x$. Then,
$$
\begin{aligned}
\ell_S\left(\frac{\mathfrak{a}_r(\nu)+\mathfrak{a}_r(\nu_0)}{\mathfrak{a}_r(\nu_0)}\right)&=\ell_{\widehat{R}}\left(\frac{\mathfrak{a}_r(\nu)\widehat{R}+\mathfrak{a}_r(\nu_0)\widehat{R}}{\mathfrak{a}_r(\nu_0)\widehat{R}}\right)
\\ &=\ell_{\widehat{R}}\left(\frac{\mathfrak{a}_r(\nu|_{K(X)})\widehat{R}+\mathfrak{a}_r(\nu_0|_{K(X)})\widehat{R}}{\mathfrak{a}_r(\nu_0|_{K(X)})\widehat{R}}\right)
\\ &=\ell_R\left(\frac{\mathfrak{a}_r(\nu|_{K(X)})+\mathfrak{a}_r(\nu_0|_{K(X)})}{\mathfrak{a}_r(\nu_0|_{K(X)})}\right).
\end{aligned}
$$
We complete the proof.
\end{proof}

\subsection{Log Fano cone singularities}
In this subsection, we define the notion of \emph{log Fano cone singularity}. For more details, see \cite{Zhu24, Liu26}.

\begin{definition}
Let $x\in (X:=\Spec R,\Delta)$ be an affine normal pair of dimension $n$, and let $\mathbb{T}:=\mathbb{G}^r_m$. A $\mathbb{T}$-action on $(X,\Delta)$ is called \emph{good} if it is effective, $x$ is a $\mathbb{T}$-fixed point, and the closure of any $\mathbb{T}$-orbit.
\end{definition}

Denote by $X:=\Spec R$. Then, we have
$$ R:=\bigoplus_{\alpha\in M}R_{\alpha},$$
where $M:=\mathrm{Hom}(\mathbb{T},\mathbb{G}_m)\cong \Z^r$ is the weight lattice. We denote by $N:=M^{\vee}$ the coweight lattice. Note that $R_0=k$ and every $R_{\alpha}$ is finite-dimensional since the $\mathbb{T}$-action is good. We define the \emph{Reeb cone} $\mathbf{t}^+_{\R}\subseteq N_{\R}$ by
$$ \mathbf{t}^+_{\R}:=\{\xi\in N_{\R}\mid \langle \xi,\alpha\rangle >0\text{ for every }\alpha\ne 0\text{ with }R_{\alpha}\ne 0\}.$$
For any $\xi\in \mathbf{t}^+_{\R}$, we can define a valuation $\mathrm{wt}_{\xi}$ by setting
$$ \mathrm{wt}_{\xi}(f):=\min\{\langle \xi,\alpha\rangle \mid \alpha\in M,\,f_{\alpha}\ne 0\},$$
where $f\in R$.

\begin{definition}
Let $x\in (X,\Delta)$ be a klt singularity with a nontrivial good $\mathbb{T}$-action. If $\Delta$ is $\mathbb{T}$-invariant, then we say the triple \emph{log Fano cone singularity} $x\in (X,\Delta;\mathbb{T})$ is a \emph{log Fano cone singularity}.
\end{definition}

Let $x\in (X:=\Spec R,\Delta)$ admit a good torus $\mathbb{T}:=\mathbb{G}^r_m$ action. Let $R=\bigoplus_{\alpha}R_{\alpha}$ be the weight decomposition. We say a valuation $\nu$ over $X$ is $\mathbb{T}$\emph{-invariant} if
$$ \nu\left(\sum_{\alpha}f_{\alpha}\right)=\min_{\alpha}\nu(f_{\alpha})$$
for $f_{\alpha}\in R_{\alpha}$.
We denote by $\mathrm{Val}^{\mathbb{T}}_{X,\ni x}$ the subset of $\mathbb{T}$-invariant valuations.

\smallskip

A $\mathbb{T}$\emph{-invariant} filtration on $R$ is a family of vector subspaces $\mathcal{F}^{p}R=\bigoplus_{\alpha} \mathcal{F}^pR_{\alpha}$ of $R$ for $p\in \R_{\ge 0}$ such that
\begin{itemize}
    \item[(1)] $\mathcal{F}^0R=R$;
    \item[(2)] $\mathcal{F}^qR\subseteq \mathcal{F}^pR$ if $p\le q$;
    \item[(3)] $\mathcal{F}^pR=\bigcap_{q<p}\mathcal{F}^qR$;
    \item[(4)] $\mathcal{F}^pR_{\alpha}\cdot \mathcal{F}^qR_{\beta}\subseteq \mathcal{F}^{p+q}R_{\alpha+\beta}$.
\end{itemize}
Note that every $\mathbb{T}$-invariant valuation gives a $\mathbb{T}$-invariant filtration by valuative ideals. Moreover, a filtration $\mathcal{F}^{\bullet}$ gives us a graded sequence of ideals $\{\mathcal{F}^p\}_{p\in \Z_{>0}}$, and we denote the graded sequence of ideals as $\mathcal{F}^{\bullet}$ by an abuse of notation. Note that we do not impose any condition for $\mathcal{F}^p$ to be $\mathfrak{m}_x$-primary.

\begin{definition} \label{definition}
Let $x\in (X,\Delta;\mathbb{T})$ be a log Fano cone singularity, and let $\nu_0$ be a $\mathbb{T}$-invariant valuation with center $x$ and $A_{X,\Delta}(\nu_0)<\infty$. We define
$$ \delta(X,\Delta;\nu_0):=\inf_{\nu\in \mathrm{Val}^{\mathbb{T},*}_{X,\ni x}}\frac{A_{X,\Delta}(\nu)}{S(\nu_0;\nu)}.$$
\end{definition}

\subsection{{$\mathbb{T}$-equivariant quasi-monomiality theorem}}
We believe this lemma is well-known for experts (see, for instance, \cite[Remark 3.9]{BLXZ23} or \cite[Proof of Theorem 1.1]{Wan25}), but we were unable to find a place in the literature where it is explicitly proved. For this reason, we provide a proof here.

\begin{lemma} \label{hevensteelyard}
Let $x\in (X:=\Spec R,\Delta,\mathbb{T})$ be a log Fano cone singularity, and let $\mathfrak{a}_{\bullet}$ be a $\mathbb{T}$-invariant graded sequence of ideals in $\mathcal{O}_X$.

\smallskip

Then, there exists a $\mathbb{T}$-invariant quasi-monomial valuation computing $\mathrm{lct}(X,\Delta,\mathfrak{a}_{\bullet})$.
\end{lemma}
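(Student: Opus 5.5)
The plan is to start from a quasi-monomial valuation computing $\mathrm{lct}(X,\Delta,\mathfrak{a}_{\bullet})$ without any equivariance, as provided by Xu's theorem on the quasi-monomiality of lc places / minimizers of lct of graded sequences on klt pairs (Xu's solution of the Jonsson–Mustață conjecture; see \cite[Chapter 1]{Xu25}). Call it $v$. Then we produce a $\mathbb{T}$-invariant one by degenerating $v$ along the torus action. Concretely, for a valuation $v$ over $X$ we define its ``initial'' $\mathbb{T}$-invariant valuation
$$ \mathrm{in}(v)\Big(\sum_{\alpha}f_{\alpha}\Big):=\min_{\alpha} v(f_{\alpha}),\qquad f_\alpha\in R_\alpha . $$
That this is a valuation follows from the standard argument: choose a generic one-parameter subgroup $\mathbb{G}_m\subseteq \mathbb{T}$, and take the limit of $v$ under the induced action. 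Alternatively, consider the $\mathbb{T}$-invariant filtration $\mathcal{F}^pR_\alpha:=\{f\in R_\alpha\mid v(f)\ge p\}$ and check that the associated order function is multiplicative, using a monomial order on $M$ to compare leading weight components.

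Next we compare the three quantities entering $A/v(\mathfrak{a}_\bullet)$.
\begin{itemize}
\item[(i)] Since each $\mathfrak{a}_m$ is $\mathbb{T}$-invariant, it is spanned by homogeneous elements, so $\mathrm{in}(v)(\mathfrak{a}_m)=v(\mathfrak{a}_m)$, and therefore $\mathrm{in}(v)(\mathfrak{a}_\bullet)=v(\mathfrak{a}_\bullet)$.
\item[(ii)] We show $A_{X,\Delta}(\mathrm{in}(v))\le A_{X,\Delta}(v)$. For this we use that $A$ is lower semicontinuous on $\mathrm{Val}_X$. The valuations $t\cdot v$ ($t\in\mathbb{T}$) all have the same log discrepancy because $\Delta$ is $\mathbb{T}$-invariant, and $\mathrm{in}(v)$ is the limit of $\lambda(s)\cdot v$ as $s\to 0$ for a suitable generic one-parameter subgroup $\lambda$.
\end{itemize}
Hence $\mathrm{in}(v)$ also computes the lct, and is $\mathbb{T}$-invariant. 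This convergence is the cleanest route I would try first. An alternative for (ii) is the argument of \cite[Lemma 4.2]{LX18}-type: realize $\mathrm{in}(v)$ on $X\times\mathbb{T}$ or as a restriction of $v\times$(toric valuation) and use that log discrepancy is additive for such product-type valuations.

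The remaining issue is quasi-monomiality of $\mathrm{in}(v)$. The plan here is to replace $v$ from the start by the whole set of lct-minimizers. Xu's theorem gives more than one minimizer: any valuation computing the lct is an lc place of a $\mathbb{Q}$-complement $\Gamma\ge$ (some $\mathfrak{a}_m$-type boundary), and the $\mathbb{T}$-invariant construction can be performed on the complement. Concretely, I would take $D$ general in a $\mathbb{T}$-invariant linear system, and show that $(X,\Delta+cD)$ is lc with $\mathrm{in}(v)$ an lc place. Since lc places of a $\mathbb{T}$-invariant lc pair admitting a $\mathbb{T}$-equivariant log resolution lie in $\mathrm{QM}(X',E)$ for a $\mathbb{T}$-equivariant dlt modification, $\mathrm{in}(v)$ (or some $\mathbb{T}$-invariant minimizer in the same simplex) is quasi-monomial. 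The main obstacle is exactly this step: ensuring the degenerate valuation remains quasi-monomial, i.e. that the lc place property passes to $\mathrm{in}(v)$ with a $\mathbb{T}$-invariant complement. I would handle it by averaging the complement's defining ideal over the torus weight decomposition, i.e. replacing it by its initial ideal, and using lower semicontinuity of lct under degeneration.
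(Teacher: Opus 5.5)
Your steps (i) and (ii) are sound in substance. Step (ii) takes a different route from the paper. The paper realizes $\mathrm{in}(v)$ as the restriction, along the smooth action map $a$, of the product valuation $\tilde v(\sum_\alpha g_\alpha z^\alpha)=\min_\alpha v(g_\alpha)$ on $\mathbb{T}\times X$. It then applies the log discrepancy inequality for smooth morphisms (a singular version of \cite[Proposition 5.13]{JM12}).

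One correction is needed: a single one-parameter subgroup does not suffice when $\dim\mathbb{T}\ge 2$. For every $\lambda\in N$, the lattice $\lambda^{\perp}\cap M$ contains a nonzero difference $\alpha-\beta$ of weights of $R$. Then $\lambda(s)\cdot v$ never separates the two summands of $f_\alpha+f_\beta$, so the limit along $\lambda$ is only the coarser $\lambda$-initial valuation.

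Use the whole torus instead. Given finitely many $f_1,\dots,f_k\in R$, linear independence of characters gives $(t\cdot v)(f_j)=\mathrm{in}(v)(f_j)$ for all $t$ outside a proper closed subset of $\mathbb{T}$. Hence $\mathrm{in}(v)$ lies in the weak closure of the orbit $\mathbb{T}\cdot v$. This shows at once that $\mathrm{in}(v)$ is a valuation, and that $A_{X,\Delta}(\mathrm{in}(v))\le A_{X,\Delta}(v)$, since $A_{X,\Delta}$ is lower semicontinuous and constant on the orbit.

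The genuine gap is quasi-monomiality, and your sketch for it does not work.

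\emph{Semicontinuity runs the wrong way.} Replacing a complement, or its defining ideal, by a $\mathbb{T}$-degeneration or initial ideal can only lower the lct. So log canonicity of $(X,\Delta+\Gamma)$ says nothing about the degenerate pair. For example, under the weight-$(1,1)$ $\mathbb{G}_m$-action on $\A^2$, $x^2+y^3$ degenerates to $x^2$ and the lct drops from $5/6$ to $1/2$.

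\emph{The other inputs also fail.} A general member of a $\mathbb{T}$-invariant linear system is not itself $\mathbb{T}$-invariant. Also, $\mathrm{lct}(\mathfrak{a}_\bullet)$ is in general not computed by an lc place of a single $\mathfrak{a}_m$. Even given a $\mathbb{T}$-invariant lc complement, nothing in your argument shows that $\mathrm{in}(v)$ is one of its lc places. The orbit-closure picture does not help either, since the translates $t\cdot v$ lie in different simplices $\mathrm{QM}(t\cdot X',t\cdot E)$.

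\emph{The missing idea is Abhyankar's inequality, which is what the paper uses.}
\begin{enumerate}
\item The valuation $\tilde v$ is quasi-monomial on $\mathbb{T}\times X'$, with the same weights on the divisors $\mathbb{T}\times E_i$. Hence $\mathrm{rat.rk}(\tilde v)+\mathrm{trdeg}_k\kappa(\tilde v)=\dim X+\dim\mathbb{T}$.
\item Its restriction to the subfield $a^*K(X)$ is exactly $\mathrm{in}(v)$, and $K(\mathbb{T}\times X)$ has transcendence degree $\dim\mathbb{T}$ over that subfield.
\item Abhyankar's inequality for this extension forces $\mathrm{rat.rk}(\mathrm{in}(v))+\mathrm{trdeg}_k\kappa(\mathrm{in}(v))\ge\dim X$.
\end{enumerate}
So $\mathrm{in}(v)$ is Abhyankar and hence quasi-monomial, and no complements are needed.
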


\begin{proof}
Let $N:=N(\mathbb{T})=\mathrm{Hom}(\mathbb{G}_m,\mathbb{T})$ be the co-weight lattice and $M:=N^*$ the weight lattice. We have a weight decomposition
$$ R=\bigoplus_{\alpha\in M}R_{\alpha}, \,\,\,R_0=k,\,\,\, \mathfrak{m}_x=\bigoplus_{\alpha \ne 0}R_{\alpha}.$$
Let $\nu$ be a quasi-monomial valuation computing $\mathrm{lct}(X,\Delta,\mathfrak{a}_{\bullet})$ (cf. \cite[Theorem 1.1]{Xu20}). For every $f\in R$, we can write $f$ as $f=\sum_{\alpha} f_{\alpha}$ for $f_{\alpha}\in R_{\alpha}$. Define
$$ \nu^{\mathbb{T}}(f):=\min_{f_{\alpha}\ne 0}\nu(f_{\alpha}).$$
\begin{itemize}
    \item[(1)] $\nu^{\mathbb{T}}$ is a well-defined valuation and $\mathbb{T}$-invariant.
\end{itemize}
Let us define $z^{\alpha}$ to be a character monomial in $k[M]$, and define
$$ \tilde{\nu}\left(\sum_{\alpha}f_{\alpha}z^{\alpha}\right)=\min_{\alpha}\nu(f_{\alpha}).$$
Then, it is a valuation on the fraction field of $K(X)[M]$. Moreover, define
$$ \rho:R\to R\otimes_k k[M], \,\,\, \rho(f)=\sum_{\alpha}f_{\alpha}z^{\alpha}. $$
Then, we can extend $\rho$ to $K(X)$, and moreover, $\nu^{\mathbb{T}}=\tilde{\nu}\circ \rho$. Therefore, $\nu^{\mathbb{T}}$ is a well-defined valuation.

\smallskip

Moreover, for any $t\in \mathbb{T}(k)$,
$$ \nu^{\mathbb{T}}(t\cdot f)=\min_{\alpha}\nu(\alpha(t)f_{\alpha})=\min_{\alpha}(f_{\alpha})=\nu^{\mathbb{T}}(f).$$
Thus, $\nu^{\mathbb{T}}$ is $\mathbb{T}$-invariant.

\begin{itemize}
    \item[(2)] For every $\mathbb{T}$-invariant ideal $\mathfrak{a}$, $\nu^{\mathbb{T}}(\mathfrak{a})=\nu(\mathfrak{a})$.
\end{itemize}

For every $f\in \mathfrak{a}$, we can write $f=\sum_{\alpha}f_{\alpha}$ such that $f_{\alpha}\in \mathfrak{a}_{\alpha}:=\mathfrak{a}\cap R_{\alpha}$. There exist $f\in \mathfrak{a}$ and $\beta$ such that 
$$\nu^{\mathbb{T}}(\mathfrak{a})=\nu^{\mathbb{T}}(f)=\nu(f_{\beta})\ge \nu(\mathfrak{a}).$$
Moreover, there exist $g\in \mathfrak{a}$, $g=\sum_{\alpha}g_{\alpha}$, and $\gamma$ such that
$$ \nu(\mathfrak{a})=\nu(g)=\nu(g_{\beta})\ge \min_{\alpha}\nu(g_{\alpha})=\nu^{\mathbb{T}}(g)\ge \nu^{\mathbb{T}}(\mathfrak{a}).$$

\begin{itemize}
    \item[(3)] $A_{X,\Delta}(\nu^{\mathbb{T}})\le A_{X,\Delta}(\nu)$.
\end{itemize}

Let $a:T\times X\to X$ be $(t,x)\mapsto tx$, and $p_2:T\times X\to X$ be the projection. Let $(X',E)\to (X,\Delta)$ be a log-smooth model, $\eta\in \bigcap_{j\in J} E_j$ the generic point of an irreducible component, and let $\alpha:=(\alpha_1,\cdots,\alpha_r)$ be a tuple such that $\nu=\nu_{\alpha,\eta}$. Let $\eta_{\mathbb{T}}$ be the generic point of an irreducible component of $\bigcap_{j\in J}T\times E_j$ corresponding to $\eta$. Then, $\tilde{\nu}=\nu_{\alpha,\eta_{\mathbb{T}}}$, and thus
$$ A_{X,\Delta}(\nu)=\sum \alpha_iA_{X,\Delta}(E_i)=\sum \alpha_i A_{T\times X,p^*_2\Delta}(E_i)=A_{T\times X,p^*_2\Delta}(\tilde{\nu}).$$
Since $\Delta$ is $\mathbb{T}$-invariant, $a^*\Delta=p^*_2\Delta$. Moreover, since $a$ is smooth, the singular version of \cite[Proposition 5.13]{JM12} (see also \cite[Proof of Theorem 23]{Liu18}) gives
$$ A_{T\times X,p^*_2\Delta}(\tilde{\nu})=A_{T\times X,a^*\Delta}(\tilde{\nu})\ge A_{X,\Delta}(\nu^{\mathbb{T}}).$$
Note that the restriction of $\tilde{\nu}$ along $(p_2)_*K(X)\subseteq K(T\times X)$ is $\nu$, and the restriction of $\tilde{\nu}$ along $a_*K(X)\subseteq K(T\times X)$ is $\nu^{\mathbb{T}}$.

\begin{itemize}
\item[(4)] $\nu^{\mathbb{T}}$ is quasi-monomial.
\end{itemize}

Since $\tilde{\nu}$ is quasi-monomial, we obtain that
$$ \mathrm{rat.rk}(\tilde{\nu})+\mathrm{trdeg}_k\kappa(\tilde{\nu})=\dim X+\dim \mathbb{T}.$$
Moreover,
$$
\begin{aligned}\mathrm{rat.rk}(\tilde{\nu})+\mathrm{trdeg}_k\kappa(\tilde{\nu})&\overset{(\whitestar)}{\le} \mathrm{rat.rk}(\nu^{\mathbb{T}})+\mathrm{trdeg}_k\kappa(\nu^{\mathbb{T}})+\mathrm{trdeg}_{a^*K(X)}K(T\times X)
\\ & =\mathrm{rat.rk}(\nu^{\mathbb{T}})+\mathrm{trdeg}_k\kappa(\nu^{\mathbb{T}})+\dim \mathbb{T},
\end{aligned}
$$
where $(\whitestar)$ comes from \cite[Theorem 1.24]{Xu25}. Hence,
$$ \mathrm{rat.rk}(\nu^{\mathbb{T}})+\mathrm{trdeg}_k\kappa(\nu^{\mathbb{T}})\ge \dim X.$$
Thus, by \cite[Proposition 1.28]{Xu25}, we complete the proof of (4).

\smallskip

Since $\nu$ computes $\mathrm{lct}(X,\Delta,\mathfrak{a}_{\bullet})$, we obtain
$$ \mathrm{lct}(X,\Delta,\mathfrak{a}_{\bullet})=\frac{A_{X,\Delta}(\nu)}{\nu(\mathfrak{a}_{\bullet})}\overset{(\star)}{\ge} \frac{A_{X,\Delta}(\nu^{\mathbb{T}})}{\nu^{\mathbb{T}}(\mathfrak{a}_{\bullet})},$$
where we used (2) and (3) in $(\star)$. We complete the proof.
\end{proof}

We can define $\delta(X,\Delta;\nu_0)$ using only valuations centered on $x$.

\begin{corollary} \label{sudosan}
Let $x\in (X:=\Spec R,\Delta,\mathbb{T})$ be a log Fano cone singularity. Let $\nu_0\in \mathrm{Val}^{\mathbb{T},*}_{X,x}$ be a $\mathbb{T}$-invariant valuation with $A_{X,\Delta}(\nu_0)<\infty$.

\smallskip

Then, we obtain that
$$ \delta(X,\Delta;\nu_0)=\inf_{\nu\in \mathrm{Val}^{\mathbb{T},*}_{X,x}}\frac{A_{X,\Delta}(\nu)}{S(\nu_0;\nu)}. $$
\end{corollary}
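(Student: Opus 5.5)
Since $\mathrm{Val}^{\mathbb{T},*}_{X,x}\subseteq \mathrm{Val}^{\mathbb{T},*}_{X,\ni x}$, the inequality $\delta(X,\Delta;\nu_0)\le \inf_{\nu\in \mathrm{Val}^{\mathbb{T},*}_{X,x}}A_{X,\Delta}(\nu)/S(\nu_0;\nu)$ is automatic. For the reverse inequality, I will take an arbitrary $\nu\in \mathrm{Val}^{\mathbb{T},*}_{X,\ni x}$ with $A_{X,\Delta}(\nu)<\infty$ and $S(\nu_0;\nu)>0$, and produce a $\nu'\in \mathrm{Val}^{\mathbb{T},*}_{X,x}$ with
$$\frac{A_{X,\Delta}(\nu')}{S(\nu_0;\nu')}\le \frac{A_{X,\Delta}(\nu)}{S(\nu_0;\nu)}.$$
The natural candidate is a small perturbation towards the center, namely $\nu'_\epsilon:=\min\{\nu,\epsilon\cdot \mathrm{wt}_{\xi}\}$ or a quasi-monomial combination in the spirit of Lemma \ref{hevensteelyard}. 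A cleaner route, which I will follow, uses the $\mathbb{T}$-structure directly.

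The key observation is this. Because $\nu$ is $\mathbb{T}$-invariant, its center $c_X(\nu)$ is a $\mathbb{T}$-invariant irreducible closed subset $Z$ containing $x$. Each valuative ideal $\mathfrak{a}_r(\nu)$ is $\mathbb{T}$-invariant, so it is a graded ideal $\bigoplus_\alpha \mathfrak{a}_r(\nu)\cap R_\alpha$. I will then replace $\nu$ by a valuation centered at $x$ that computes the lct of the $\mathbb{T}$-invariant graded sequence $\mathfrak{b}_\bullet:=\mathfrak{a}_\bullet(\nu)+\mathfrak{m}_x^{\bullet}$-type data. Concretely, set $\mathfrak{b}_m:=\sum_{i}\mathfrak{a}_{i}(\nu)\cdot \mathfrak{a}_{m-i}(\mathrm{wt}_{\xi}/\epsilon)$, the graded sequence of the valuation $\nu\wedge (\mathrm{wt}_\xi/\epsilon)$. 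This sequence is $\mathbb{T}$-invariant and $\mathfrak{m}_x$-primary. Lemma \ref{hevensteelyard} then provides a $\mathbb{T}$-invariant quasi-monomial $\nu'$ computing $\mathrm{lct}(X,\Delta,\mathfrak{b}_\bullet)$, and since $\mathfrak{b}_\bullet$ is $\mathfrak{m}_x$-primary, $\nu'$ is centered at $x$. For the comparison:
\begin{itemize}
\item[(a)] After normalizing $\nu'(\mathfrak{b}_\bullet)=1$, we have $\mathfrak{b}_r\subseteq \mathfrak{a}_r(\nu')$. Since $\mathfrak{a}_r(\nu)\subseteq \mathfrak{b}_r$ up to the $\epsilon$-perturbation, this gives $S(\nu_0;\nu')\ge S(\nu_0;\nu)-O(\epsilon)$, by monotonicity of $\mathrm{vol}(\nu_0;\cdot)$ in the ideals (Definition \ref{yongheungdong}).
\item[(b)] We have $A_{X,\Delta}(\nu')=\mathrm{lct}(\mathfrak{b}_\bullet)\le A_{X,\Delta}(\nu)/\nu(\mathfrak{b}_\bullet)\le A_{X,\Delta}(\nu)$.
\end{itemize}
Letting $\epsilon\to 0$ then gives the infimum inequality.

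The main obstacle is step (a): making precise that the $\epsilon$-perturbation changes $S(\nu_0;\cdot)$ only by $o(1)$. For this I will use that $\mathfrak{a}_r(\nu_0)\supseteq \mathfrak{a}_{cr}(\mathrm{wt}_\xi)$ for some $c>0$, which holds by an Izumi-type comparison for the $\mathbb{T}$-invariant valuation $\nu_0$ centered at $x$. Consequently, for $t$ in a bounded range, adding $\mathfrak{a}_{rt}(\mathrm{wt}_\xi/\epsilon)$ is absorbed into $\mathfrak{a}_r(\nu_0)$ once $\epsilon$ is small. The integrand in the definition of $S$ is supported on $t\le T_{\max}$, where $\mathrm{vol}(\nu_0;\nu/t)=0$ beyond some $T_{\max}$ by the same Izumi bound. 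On that range the perturbation has no effect for $\epsilon<1/(cT_{\max})$, and the volumes computed in Lemma \ref{3.2} agree.
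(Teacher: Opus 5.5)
There is a genuine gap, and it is in step (b), not in step (a).

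\textbf{Step (a) is immediate.} Taking $i=r$ in the definition of $\mathfrak{b}_r$ gives $\mathfrak{a}_r(\nu)\subseteq \mathfrak{b}_r\subseteq \mathfrak{a}_r(\nu')$. Hence $\nu'\ge \nu$ on $R$, and $S(\nu_0;\nu')\ge S(\nu_0;\nu)$ by monotonicity, with no $O(\epsilon)$ term. Your absorption argument is also mis-scaled: $\mathfrak{a}_{rt}(\mathrm{wt}_\xi/\epsilon)=\mathfrak{a}_{\epsilon rt}(\mathrm{wt}_\xi)$ is \emph{not} inside $\mathfrak{a}_r(\nu_0)$ for bounded $t$ and small $\epsilon$.

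\textbf{Step (b) fails.} The claimed bound $\mathrm{lct}(X,\Delta,\mathfrak{b}_\bullet)\le A_{X,\Delta}(\nu)/\nu(\mathfrak{b}_\bullet)\le A_{X,\Delta}(\nu)$ breaks down for two reasons.
\begin{itemize}
\item The only case that matters is $\nu$ not centered at $x$. Then no $\mathfrak{m}_x$-primary ideal lies in the prime of $c_X(\nu)$, so $\nu(\mathfrak{b}_m)=0$ for all $m$. Thus $\nu(\mathfrak{b}_\bullet)=0$ and the bound reads $+\infty$.
\item Even ignoring centers, $\mathfrak{b}_\bullet\supseteq \mathfrak{a}_\bullet(\nu)$ gives $\nu(\mathfrak{b}_\bullet)\le 1$. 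That is the wrong direction for your second inequality.
\end{itemize}

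\textbf{With your scaling the conclusion is actually false.} If $v(\mathfrak{b}_\bullet)\ge 1$, then $\mathfrak{a}_m(\mathrm{wt}_\xi/\epsilon)=\mathfrak{a}_{\epsilon m}(\mathrm{wt}_\xi)\subseteq \mathfrak{b}_m$ forces $v(\mathfrak{a}_\bullet(\mathrm{wt}_\xi))\gtrsim 1/\epsilon$. Hence $A_{X,\Delta}(\nu')=\mathrm{lct}(X,\Delta,\mathfrak{b}_\bullet)\ge c/\epsilon$ with $c=\mathrm{lct}(X,\Delta,\mathfrak{a}_\bullet(\mathrm{wt}_\xi))>0$. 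As $\epsilon\to 0$ the $\mathrm{wt}_\xi$-part dominates instead of disappearing, and $\nu'$ has nothing to do with $\nu$ in the limit.

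\textbf{What is missing.} You need an upper bound on the log discrepancy of some $x$-centered valuation that dominates $\nu$. This cannot come from an lct inequality tested against $\nu$, because $\nu$ does not see $\mathfrak{m}_x$-primary ideals. The paper proceeds as follows.
\begin{itemize}
\item It uses the explicit perturbation $\nu_\varepsilon(\sum_\alpha f_\alpha)=\min_\alpha(\nu(f_\alpha)+\varepsilon\langle \xi,\alpha\rangle)$. This valuation is centered at $x$ and satisfies $\nu_\varepsilon\ge \nu$, so $S(\nu_0;\nu_\varepsilon)\ge S(\nu_0;\nu)$.
\item It first replaces $\nu$ by a $\mathbb{T}$-invariant quasi-monomial valuation computing $\mathrm{lct}(X,\Delta,\mathfrak{a}_\bullet(\nu))$ (Lemma \ref{hevensteelyard}). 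This does not increase $A/S$.
\item It then uses the formula $A_{X,\Delta}(\nu_\varepsilon)=A_{X,\Delta}(\nu)+\varepsilon A_{X,\Delta}(\mathrm{wt}_\xi)$ from \cite[Lemma 2.8]{LW24}.
\end{itemize}

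\textbf{On a possible repair.} If you rescale your construction to $\epsilon\,\mathrm{wt}_\xi$, you can bound $\mathrm{lct}(\mathfrak{b}_\bullet)$ by testing against $\nu_\epsilon$, since $\nu_\epsilon\ge \nu$ and $\nu_\epsilon\ge \epsilon\,\mathrm{wt}_\xi$ give $\nu_\epsilon(\mathfrak{b}_\bullet)\ge 1$. But this needs exactly the same log discrepancy formula (and the quasi-monomial reduction). At that point the detour through $\mathfrak{b}_\bullet$ is superfluous: $\nu_\epsilon$ itself is the competitor.
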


\begin{proof}
Let $N:=N(\mathbb{T})=\mathrm{Hom}(\mathbb{G}_m,\mathbb{T})$ be the co-weight lattice and $M:=N^*$ the weight lattice. We have a weight decomposition
$$ R=\bigoplus_{\alpha\in M}R_{\alpha}, \,\,\,R_0=k,\,\,\, \mathfrak{m}_x=\bigoplus_{\alpha \ne 0}R_{\alpha}.$$

\smallskip

Let $\delta_c(X,\Delta;\nu_0)$ be the right-hand side. Since $\mathrm{Val}^{\mathbb{T},*}_{X,x}\subseteq \mathrm{Val}^{\mathbb{T},*}_{X,\ni x}$, we obtain $\delta(X,\Delta;\nu_0)\le \delta_c(X,\Delta;\nu_0)$.

\smallskip

Let $\xi\in \mathfrak{t}^+_{\R}$ be a Reeb vector, and let $\nu\in \mathrm{Val}^{\mathbb{T},*}_{X,\ni x}$ be a valuation that is not centered on $x$. Then, $\nu(\mathfrak{m}_x)=0$. Let $\varepsilon>0$, and let us define the valuation $\nu_{\varepsilon}$ by
$$ \nu_{\varepsilon}\left(\sum_{\alpha}f_{\alpha}\right)=\min_{f_{\alpha}\ne 0}(\nu(f_{\alpha})+\varepsilon \langle \xi,\alpha\rangle),\,\,\,f_{\alpha}\in R_{\alpha}.$$
If $f_{\alpha}\in \mathfrak{m}_x$, then $\alpha\ne 0$, and $\nu_{\varepsilon}(f_{\alpha})>0$. Hence, $\nu_{\varepsilon}\in \mathrm{Val}^{\mathbb{T},*}_{X,x}$. Moreover, $\nu_{\varepsilon}\to \nu$ as $\varepsilon \to 0$ pointwise.

\smallskip

There exists $C_{\xi}$ such that for every $f\in R$,
\begin{equation} \label{용흥동}
\mathrm{wt}_{\xi}(f)\le C_{\xi}\nu_0(f)\,\,\,(0\ne f\in R)
\end{equation}
(cf. \cite[Proposition 2.2]{XZ20}).

\smallskip

Let us prove that for a positive real $r$ and $s>C_{\xi}\varepsilon$,
$$ \mathfrak{a}_{rs}(\nu)+\mathfrak{a}_r(\nu_0)\subseteq \mathfrak{a}_{rs}(\nu_{\varepsilon})+\mathfrak{a}_r(\nu_0)\subseteq \mathfrak{a}_{r(s-C_{\xi}\varepsilon)}(\nu)+\mathfrak{a}_r(\nu_0).$$
The first inclusion comes from $\nu_{\varepsilon}\ge \nu$. For the second, let $f\in \mathfrak{a}_{rs}(\nu_{\varepsilon})$, and let $f=\sum_{\alpha}f_{\alpha}$ be the homogeneous decomposition. Then,
\begin{itemize}
    \item if $\nu_0(f_{\alpha})\ge r$, then $f_{\alpha}\in \mathfrak{a}_r(\nu_0)$,
    \item if $\nu_0(f_{\alpha})<r$, then
    $$ 
    \begin{aligned}
    \nu(f_{\alpha})&=\nu_{\varepsilon}(f_{\alpha})-\varepsilon \mathrm{wt}_{\xi}(f_{\alpha}) &
    \\ & \ge rs-\varepsilon C_{\xi}\nu_0(f_{\alpha}) & (\mbox{\ding{81}})
    \\ &>r(s-C_{\xi}\varepsilon), & 
    \end{aligned}$$
    and therefore $f_{\alpha}\in \mathfrak{a}_{r(s-C_{\xi}\varepsilon)}(\nu)$ (we used (\ref{용흥동}) in $(\mbox{\ding{81}})$).
\end{itemize}

Hence, we obtain
$$ \ell_R\left(\frac{\mathfrak{a}_{rs}(\nu)+\mathfrak{a}_r(\nu_0)}{\mathfrak{a}_r(\nu_0)}\right)\le \ell_R\left(\frac{\mathfrak{a}_{rs}(\nu_{\varepsilon})+\mathfrak{a}_r(\nu_0)}{\mathfrak{a}_r(\nu_0)}\right)\le \ell_R\left(\frac{\mathfrak{a}_{r(s-C_{\xi}\varepsilon)}(\nu)+\mathfrak{a}_r(\nu_0)}{\mathfrak{a}_r(\nu_0)}\right).$$
Therefore,
$$ \mathrm{vol}\left(\nu_0;\frac{\nu}{s}\right)\le \mathrm{vol}\left(\nu_0;\frac{\nu_{\varepsilon}}{s}\right)\le \mathrm{vol}\left(\nu_0;\frac{\nu}{s-C_{\xi}\varepsilon}\right).$$
Thus,
$$ 0\le S(\nu_0;\nu_{\varepsilon})-S(\nu_0;\nu)\le \frac{n+1}{n}A_{X,\Delta}(\nu_0)C_{\xi}\varepsilon.$$
Hence, $S(\nu_0;\nu_{\varepsilon})\to S(\nu_0;\nu)$ as $\varepsilon\to 0^+$.

\smallskip

Let $\nu_i\in \mathrm{Val}^{\mathbb{T},*}_{X,\ni x}$ be a $\mathbb{T}$-invariant valuation such that
$$ \frac{A_{X,\Delta}(\nu_i)}{S(\nu_0;\nu_i)}\to \delta(X,\Delta;\nu_0).$$ 
We may assume $\nu_i$ to be quasi-monomial; indeed, let $\nu'_i$ be a $\mathbb{T}$-invariant quasi-monomial valuation computing $\mathrm{lct}(X,\Delta,\mathfrak{a}_{\bullet}(\nu_i))$ (cf. Lemma \ref{hevensteelyard}). Under the scaling $\nu'_i(\mathfrak{a}_{\bullet}(\nu_i))=1$, we obtain $\nu'_i\ge \nu_i$, and so $S(\nu_0;\nu'_i)\ge S(\nu_0;\nu_i)$. Moreover,
$$ A_{X,\Delta}(\nu'_i)=\mathrm{lct}(X,\Delta,\mathfrak{a}_{\bullet}(\nu_i))\le A_{X,\Delta}(\nu_i).$$
Combining those two results, we obtain
$$ \frac{A_{X,\Delta}(\nu_i)}{S(\nu_0;\nu_i)}\ge \frac{A_{X,\Delta}(\nu'_i)}{S(\nu_0;\nu'_i)}.$$
Moreover, since any prime ideal of the center of $\nu'_i$ is $\mathbb{T}$-invariant, the closure of the center contains $x$ because the unique fixing point of $\mathbb{T}$-action is $x$. Hence there is no problem replacing $\nu_i$ with $\nu'_i$.

\smallskip

If $c_X(\nu_i)=x$ for infinitely many $i$, then there is nothing to do. If $c_X(\nu_i)\ne x$ for every $i$, then by \cite[Lemma 2.8]{LW24},
$$ A_{X,\Delta}(\nu_{i\varepsilon})=A_{X,\Delta}(\nu_i)+\varepsilon A_{X,\Delta}(\mathrm{wt}_{\xi}).$$
Hence,
$$ \delta_c(X,\Delta;\nu_0)\le \frac{A_{X,\Delta}(\nu_{i\varepsilon})}{S(\nu_0;\nu_{i\varepsilon})}\to \frac{A_{X,\Delta}(\nu_i)}{S(\nu_0;\nu_i)}\text{ as }\varepsilon\to 0^+.$$
Therefore, we may choose $\varepsilon_i>0$ such that
$$\delta_c(X,\Delta;\nu_0)\le \frac{A_{X,\Delta}(\nu_{i\varepsilon_i})}{S(\nu_0;\nu_{i\varepsilon_i})}\to \delta(X,\Delta;\nu_0)\text{ as }i\to \infty. $$
We have obtained the opposite inequality.
\end{proof}

\begin{remark} \label{pohang}
As noted in \cite[Remark 3.7]{Wan25}, there may be no computing valuation of $\delta(X,\Delta;\nu_0)$ centered on $x$; this is the reason why we use $\mathrm{Val}^{\mathbb{T}}_{X,\ni x}$, rather than $\mathrm{Val}^{\mathbb{T}}_{X,x}$. This is also the reason why our canonical definition of $\delta(X,\Delta;\nu_0)$ includes all valuations with $x\in \overline{\{c_X(\nu)\}}$.
\end{remark}

\section{Proof of the Main Theorem} \label{3}

The aim of this section is to prove the main theorem of the paper, Theorem \ref{누영이}.

\subsection{Uniform Fujita approximation}

The Fujita approximation theorem is first proven in \cite{Fuj94}, and this idea was extended to provide a uniform approximation on $S$-invariant in \cite[Section 5]{BJ20}. In this subsection, we extend the idea to $S(\nu_0;\nu)$.

\smallskip

Let us explain the difference in the construction of $\tilde{S}_m(\nu,L)$ in \cite[Section 5]{BJ20} compared to $\overline{S}_m(\nu_0;\nu)$. In \cite[Section 5]{BJ20}, the reason we take integral closure in the definition is to use \cite[Proposition 6.3]{BJ20} and ensure
$$ S(\mathcal{F})\ge \liminf_{m\to \infty}\liminf_{i\to \infty}\tilde{S}_m(\mathcal{F}_i)$$
in \cite[Proposition 6.7]{BJ20}. In our setting, it seems impossible to prove an analogous statement for \cite[Proposition 6.3]{BJ20}, and thus we will not take integral closure. Fortunately, we can prove an analogous statement for \cite[Proposition 6.7]{BJ20} using Lemma \ref{duidui} (see (\ref{hyeongsan})).

\begin{definition}
Let $x\in (X:=\Spec R,\Delta)$ be an $n$-dimensional klt singularity, fix $\nu_0\in \mathrm{Val}^*_{X,x}$ with $A_{X,\Delta}(\nu_0)<\infty$, and let $\nu\in \mathrm{Val}^*_{X,\ni x}$.

\smallskip

For a positive integer $m$ and $t\ge 0$, define
$$ \overline{\mathrm{vol}}_m\left(\nu_0;\nu\right):=\limsup_{q\to \infty}\frac{\ell\left(\frac{\mathfrak{a}_{m}(\nu)^q+\mathfrak{a}_{mq}(\nu_0)}{\mathfrak{a}_{mq}(\nu_0)}\right)}{\frac{(mq)^n}{n!}}.$$
Moreover, define
$$ \overline{S}_m(\nu_0;\nu):=\frac{n+1}{n}\frac{A_{X,\Delta}(\nu_0)}{\mathrm{vol}(\nu_0)}\int^{\infty}_{0}\overline{\mathrm{vol}}_m\left(\nu_0;\frac{\nu}{t}\right)\,\mathrm{d}t. $$
\end{definition}

\begin{theorem} \label{theorem}
Let $x\in (X=\Spec R,\Delta)$ be an $n$-dimensional klt singularity, and fix $\nu_0\in \mathrm{Val}^*_{X,x}$.

\smallskip

There exists a constant $C=C(X,\Delta,x,\nu_0)>0$ and $a_0>0$ such that, for every $m\ge a_0$ and every $\nu\in \mathrm{Val}^*_{X,\ni x}$,
$$ 0\le S(\nu_0;\nu)-\overline{S}_m(\nu_0;\nu)\le \frac{CA_{X,\Delta}(\nu)}{m}.$$
\end{theorem}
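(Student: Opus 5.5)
The plan is to adapt the proof of \cite[Theorem 5.3]{BJ20} (uniform Fujita approximation), replacing global sections by the relative lengths $\ell_R\big((\,\cdot\,+\mathfrak{a}_r(\nu_0))/\mathfrak{a}_r(\nu_0)\big)$, and using Lemma \ref{3.2} to compute all volumes along the subsequences $r=mq$.

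\textbf{Lower bound.} Since $\nu$ is a valuation, $\mathfrak{a}_{mt}(\nu)^q\subseteq \mathfrak{a}_{mqt}(\nu)$. Hence, for $\nu/t$,
$$\ell_R\left(\frac{\mathfrak{a}_{mt}(\nu)^q+\mathfrak{a}_{mq}(\nu_0)}{\mathfrak{a}_{mq}(\nu_0)}\right)\le \ell_R\left(\frac{\mathfrak{a}_{mqt}(\nu)+\mathfrak{a}_{mq}(\nu_0)}{\mathfrak{a}_{mq}(\nu_0)}\right).$$
Taking $\limsup_q$ and applying Lemma \ref{3.2} with $p=m$ gives $\overline{\mathrm{vol}}_m(\nu_0;\nu/t)\le \mathrm{vol}(\nu_0;\nu/t)$. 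Integrating in $t$ gives $S(\nu_0;\nu)\ge\overline{S}_m(\nu_0;\nu)$.

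\textbf{Upper bound.} I would use multiplier ideals of the graded sequence $\mathfrak{a}_\bullet:=\mathfrak{a}_\bullet(\nu)$. Three facts are needed.
\begin{itemize}
\item[(i)] $\mathfrak{a}_\lambda(\nu)\subseteq\mathcal{J}(X,\Delta,\mathfrak{a}_\bullet^{\lambda})$. This is standard, since $(X,\Delta)$ is klt.
\item[(ii)] $\mathcal{J}(X,\Delta,\mathfrak{a}_\bullet^{\lambda})\subseteq \mathfrak{a}_{\lambda-A_{X,\Delta}(\nu)}(\nu)$. This holds because $\nu(\mathcal{J}(\mathfrak{a}_\bullet^\lambda))>\lambda\,\nu(\mathfrak{a}_\bullet)-A_{X,\Delta}(\nu)$ and $\nu(\mathfrak{a}_\bullet)=1$; this is the valuative criterion recalled in the multiplier ideal subsection.
\item[(iii)] A subadditivity statement on the singular variety $X$. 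There is a fixed nonzero ideal $\mathfrak{b}$ (a Jacobian-type ideal depending only on $(X,\Delta)$, e.g.\ via Takagi's subadditivity, or by comparing with a log resolution and using $\mathrm{Jac}$) such that
$$\mathfrak{b}^{q}\,\mathcal{J}(\mathfrak{a}_\bullet^{mq})\subseteq \mathcal{J}(\mathfrak{a}_\bullet^{m})^q.$$
\end{itemize}
Fix $0\ne g\in\mathfrak{b}$ and set $c:=\nu_0(g)<\infty$; this depends only on $(X,\Delta,x,\nu_0)$. Combining (i)--(iii) gives
$$g^q\,\mathfrak{a}_{mqt}(\nu)\subseteq \mathfrak{a}_{mt-A}(\nu)^q,\qquad A:=A_{X,\Delta}(\nu).$$
Multiplication by $g^q$ is injective and maps $\mathfrak{a}_{r}(\nu_0)$ into $\mathfrak{a}_{r+qc}(\nu_0)$. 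I expect this to yield
$$\ell_R\left(\frac{\mathfrak{a}_{mqt}(\nu)+\mathfrak{a}_{mq}(\nu_0)}{\mathfrak{a}_{mq}(\nu_0)}\right)\le \ell_R\left(\frac{\mathfrak{a}_{mt-A}(\nu)^q+\mathfrak{a}_{mq+qc}(\nu_0)}{\mathfrak{a}_{mq+qc}(\nu_0)}\right)+\ell_R\left(\frac{\mathfrak{a}_{mq}(\nu_0)+(g^q)}{(g^q)}\cap\cdots\right),$$
where the error term is controlled by the $\nu_0$-colength of a shell of width $qc$, i.e.\ by $O(\mathrm{vol}(\nu_0)\,c\,(mq)^{n-1}q)$. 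After normalizing by $(mq)^n/n!$, this error contributes $O(c/m)$ times a quantity which is nonzero only for $t$ in a bounded range. Rescaling $mq+qc=(m+c)q$ and $t'=(mt-A)/(m+c)$, I would obtain an inequality of the form
$$\mathrm{vol}\left(\nu_0;\frac{\nu}{t}\right)\le\left(1+\frac{c}{m}\right)^n\overline{\mathrm{vol}}_{m'}\left(\nu_0;\frac{\nu}{t-(A+ct)/m}\right)+O\!\left(\frac1m\right)\mathbf 1_{t\le T(\nu)},$$
possibly after replacing $m$ by a comparable index $m'$ (which is why the statement requires $m\ge a_0$).

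\textbf{Integration, and the main obstacle.} Integrating in $t$ should give
$$S-\overline{S}_m\le \frac{C'}{m}\big(A_{X,\Delta}(\nu)+T(\nu)\big),$$
where
$$T(\nu):=\sup\{t:\mathrm{vol}(\nu_0;\nu/t)>0\}\le\sup_{f}\frac{\nu(f)}{\nu_0(f)}.$$
The remaining input is an Izumi/Skoda-type estimate $T(\nu)\le C''A_{X,\Delta}(\nu)$, uniform over all of $\mathrm{Val}^*_{X,\ni x}$ including valuations not centered at $x$. For centered valuations this follows from Li's Izumi inequality $\nu\le C\,A(\nu)\,\mathrm{ord}_x$ combined with $\mathrm{ord}_x\le C\nu_0$. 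For non-centered $\nu$, I would use $\nu(f)\le A(\nu)\,\mathrm{ord}_{c_X(\nu)}(f)\cdot C$ together with $\mathrm{ord}_{c_X(\nu)}\le\mathrm{ord}_x$, since $x\in\overline{c_X(\nu)}$; the constant must be uniform along the specialization. I expect the singular subadditivity step (iii) to be the main obstacle: one must check that the Jacobian correction costs only a bounded $\nu_0$-shift per factor, hence $O(1/m)$ overall. I would first try to obtain (iii) from a fixed log resolution and $\mathrm{Jac}_X\cdot\mathcal{J}(\mathfrak{a}\mathfrak{b})\subseteq\mathcal{J}(\mathfrak{a})\mathcal{J}(\mathfrak{b})$ applied $q$ times.
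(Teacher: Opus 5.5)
Your route is the paper's route: a Jacobian-type element $g$ from Takagi's subadditivity, then $\mathfrak{a}_{\lambda}(\nu)\subseteq\mathcal{J}(\mathfrak{a}_\bullet(\nu)^{\lambda})\subseteq\mathfrak{a}_{\lambda-A_{X,\Delta}(\nu)}(\nu)$ to get $g^q\mathfrak{a}_{q\lambda}(\nu)\subseteq\mathfrak{a}_{\lambda-A_{X,\Delta}(\nu)}(\nu)^q$, then multiplication by $g^q$, Lemma \ref{3.2}, and Izumi to show $\mathrm{vol}(\nu_0;\nu/t)=0$ for $t\ge C''A_{X,\Delta}(\nu)$. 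Your lower bound is fine.

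\textbf{Gap: the index bookkeeping does not close as you set it up.} You start from $\nu_0$-level $mq$ on the $S$-side. Multiplication by $g^q$ then lands you at $\nu_0$-level $(m+c)q$ with the ideal $\mathfrak{a}_{mt-A}(\nu)^q$. That is a term of $\overline{\mathrm{vol}}_{m+c}$, not of $\overline{\mathrm{vol}}_m$. Here $c=\nu_0(g)$ is an arbitrary positive real, and no comparison between $\overline{S}_{m+c}$ and $\overline{S}_m$ is available. So ``replacing $m$ by a comparable index $m'$'' does not give the statement for integer $m$.

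The fix, which is what the paper does, is to put the shift on the $S$-side, where it costs nothing. Set $p:=m-c$ and compute $\mathrm{vol}(\nu_0;\nu/t)$ along $r=pq$; Lemma \ref{3.2} allows this for every real $p>0$. With $u:=(pt-A_{X,\Delta}(\nu))/m$, multiplication by $g^q$ maps $(\mathfrak{a}_{pqt}(\nu)+\mathfrak{a}_{pq}(\nu_0))/\mathfrak{a}_{pq}(\nu_0)$ into $(\mathfrak{a}_{mu}(\nu)^q+\mathfrak{a}_{mq}(\nu_0))/\mathfrak{a}_{mq}(\nu_0)$. This gives $\overline{\mathrm{vol}}_m(\nu_0;\nu/u)\ge (p/m)^n\,\mathrm{vol}(\nu_0;\nu/t)$. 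The hypothesis $m\ge a_0:=2c$ only serves to ensure $p\ge m/2$.

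\textbf{The ``shell'' error term is unnecessary.} The injectivity you need is not that $g^q$ is a nonzerodivisor on $R$. It is that $\nu_0(g^qf)=qc+\nu_0(f)$ exactly, so $g^qf\in\mathfrak{a}_{r+qc}(\nu_0)$ forces $f\in\mathfrak{a}_r(\nu_0)$. Hence $f\mapsto g^qf$ is an injective $R$-linear map $R/\mathfrak{a}_r(\nu_0)\hookrightarrow R/\mathfrak{a}_{r+qc}(\nu_0)$, and the length comparison holds with no correction term.

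The only losses then come from the change of variables $t\mapsto u$ in the integral:
\begin{itemize}
\item the factor $1-(p/m)^{n+1}\le (n+1)c/m$;
\item the discarded interval $[0,A_{X,\Delta}(\nu)/p]$.
\end{itemize}
Both are $O(A_{X,\Delta}(\nu)/m)$ once Izumi bounds $\int_0^\infty\mathrm{vol}(\nu_0;\nu/t)\,\mathrm{d}t$ by a constant times $A_{X,\Delta}(\nu)$.
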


\begin{proof}
We obtain $\mathfrak{a}_{mt}(\nu)^q\subseteq \mathfrak{a}_{mqt}(\nu)$. Therefore, by definition, $\overline{S}_m(\nu_0;\nu)\le S(\nu_0;\nu)$.

\smallskip

The subadditivity (cf. \cite[Theorem]{Tak13}) implies that there exists $h\in R$ such that for every $c\ge A_{X,\Delta}(\nu)$ and every $q\ge 1$,
$$  h^{q-1}\mathcal{J}(X,\Delta,\mathfrak{a}_{\bullet}(\nu)^{qc})\subseteq \mathcal{J}(X,\Delta,\mathfrak{a}_{\bullet}(\nu)^c)^q.$$
In fact, we can take $h$ as an element of $\mathrm{Jac}_{X}\mathcal{O}_{X}(-s\Delta)$ where $\mathrm{Jac}_{X}$ is the Jacobian ideal and $s>0$ is an integer such that $s(K_X+\Delta)$ is Cartier. Moreover, \cite[Proposition 5.7]{BJ20} gives
$$ \mathcal{J}(X,\Delta,\mathfrak{a}_{\bullet}(\nu)^c)\subseteq \mathfrak{a}_{c-A_{X,\Delta}(\nu)}(\nu).$$
Hence,
\begin{equation} \label{mizmom}
h^q\mathfrak{a}_{qc}(\nu)\subseteq \mathfrak{a}_{c-A_{X,\Delta}(\nu)}(\nu)^q
\end{equation}

\smallskip

Now, define $p:=m-\nu_0(h)$, and fix $t\ge \frac{A_{X,\Delta}(\nu)}{p}$, and set $u:=\frac{pt-A_{X,\Delta}(\nu)}{m}\ge 0$. Applying (\ref{mizmom}) with $c=pt$ gives
$$ h^q\mathfrak{a}_{qpt}(\nu)\subseteq \mathfrak{a}_{pt-A_{X,\Delta}(\nu)}(\nu)^q=\mathfrak{a}_{mu}(\nu)^q. $$
Multiplication by $h^q$ therefore gives a map
$$ \frac{\mathfrak{a}_{qpt}(\nu)+\mathfrak{a}_{qp}(\nu_0)}{\mathfrak{a}_{qp}(\nu_0)}\to \frac{\mathfrak{a}_{mu}(\nu)^q+\mathfrak{a}_{mq}(\nu_0)}{\mathfrak{a}_{mq}(\nu_0)}.$$
Indeed, if $f\in \mathfrak{a}_{qp}(\nu_0)$, then $\nu_0(h^qf)\ge qm$, and therefore $h^qf\in \mathfrak{a}_{mq}(\nu_0)$, thus the map is well-defined. Moreover, if $h^qf\in \mathfrak{a}_{mq}(\nu_0)$, then
$$ \nu_0(f)\ge mq-q\nu_0(h)\ge qp,$$
and so $f\in \mathfrak{a}_{qp}(\nu_0)$. Hence, the map is injective. Thus,
$$ \ell\left(\frac{\mathfrak{a}_{mu}(\nu)^q+\mathfrak{a}_{mq}(\nu_0)}{\mathfrak{a}_{mq}(\nu_0)}\right)\ge \ell\left(\frac{\mathfrak{a}_{qpt}(\nu)+\mathfrak{a}_{qp}(\nu_0)}{\mathfrak{a}_{qp}(\nu_0)}\right).$$
By Lemma \ref{3.2}, after dividing by $\frac{(mq)^n}{n!}$ and letting $q\to \infty$, we get
\begin{equation} \label{dalseo}
\overline{\mathrm{vol}}_m\left(\nu_0;\frac{\nu}{u}\right)\ge \left(\frac{p}{m}\right)^n\mathrm{vol}\left(\nu_0;\frac{\nu}{t}\right).
\end{equation}
Since $u=\frac{pt-A_{X,\Delta}(\nu)}{m}$, integrating (\ref{dalseo}) gives
$$ \int^{\infty}_0 \overline{\mathrm{vol}}_m\left(\nu_0;\frac{\nu}{u}\right)\,\mathrm{d}u\ge \left(\frac{p}{m}\right)^{n+1}\int^{\infty}_{\frac{A_{X,\Delta}(\nu)}{p}}\mathrm{vol}\left(\nu_0;\frac{\nu}{t}\right)\,\mathrm{d}t.$$
Since $0\le \mathrm{vol}\left(\nu_0;\frac{\nu}{t}\right)\le \mathrm{vol}(\nu_0)$ by definition, we have 
$$\int^{\frac{A_{X,\Delta}(\nu)}{p}}_0\mathrm{vol}\left(\nu_0;\frac{\nu}{t}\right)\,\mathrm{d} t\le \frac{A_{X,\Delta}(\nu)}{p}\mathrm{vol}(\nu_0).$$
Therefore,
\begin{equation} \label{park}
\int^{\infty}_0 \overline{\mathrm{vol}}_m\left(\nu_0;\frac{\nu}{u}\right)\,\mathrm{d}u\ge \left(\frac{p}{m}\right)^{n+1}\left(\int^{\infty}_0\mathrm{vol}\left(\nu_0;\frac{\nu}{t}\right)\,\mathrm{d}t-\frac{A_{X,\Delta}(\nu)}{p}\mathrm{vol}(\nu_0)\right).
\end{equation}

\smallskip

By \cite[Proposition 7.45]{Xu25}, one sees that there exists $c_1>0$ such that $\nu(f)\le c_1A_{X,\Delta}(\nu)\mathrm{ord}_x(f)$, and hence
$$ \nu_0(f)\ge \nu_0(\mathfrak{m})\mathrm{ord}_x(f)\ge \frac{\nu_0(\mathfrak{m})\cdot \lambda}{c_1A_{X,\Delta}(\nu)}$$
as well as $f\in \mathfrak{a}_{\lambda}(\nu)$. Set $b_0:=\nu_0(\mathfrak{m})$. Then, $\mathfrak{a}_{\lambda}(\nu)\subseteq \mathfrak{a}_{\frac{b_0 \lambda}{c_1A_{X,\Delta}(\nu)}}(\nu_0)$. Taking $\lambda=mt$, we get

\begin{equation} \label{uchang}
\mathfrak{a}_{mt}(\nu)\subseteq \mathfrak{a}_m(\nu_0)\text{ whenever }t\ge \frac{c_1}{b_0}A_{X,\Delta}(\nu).
\end{equation}

Therefore, $\mathrm{vol}\left(\nu_0;\frac{\nu}{t}\right)=0$ for $t\ge \frac{c_1}{b_0}A_{X,\Delta}(\nu)$. Since $\mathrm{vol}\left(\nu_0;\frac{\nu}{t}\right)\le \mathrm{vol}(\nu_0)$, this gives
\begin{equation} \label{izumi}
\int^{\infty}_0 \mathrm{vol}\left(\nu_0;\frac{\nu}{t}\right)\,\mathrm{d} t\le \frac{c_1}{b_0}\mathrm{vol}(\nu_0)A_{X,\Delta}(\nu).
\end{equation}

\smallskip

Let
$$ I:=\int^{\infty}_{0} \mathrm{vol}\left(\nu_0;\frac{\nu}{t}\right)\,\mathrm{d} t,\,\,\,\,\,\,\, I_m:=\int^{\infty}_0\overline{\mathrm{vol}}_m\left(\nu_0;\frac{\nu}{t}\right)\,\mathrm{d}t. $$
Then, from (\ref{park}),
$$ I_m\ge \left(\frac{p}{m}\right)^{n+1}\left(I-\frac{A_{X,\Delta}(\nu)}{p}\mathrm{vol}(\nu_0)\right).$$
Thus,
$$ I-I_m\le \left(1-\left(\frac{p}{m}\right)^{n+1}\right)I+\left(\frac{p}{m}\right)^{n+1}\frac{A_{X,\Delta}(\nu)}{p}\mathrm{vol}(\nu_0).$$
For $m\ge 2\nu_0(h)$, we have $p=m-\nu_0(h)\ge \frac{m}{2}$, so
$$ 1-\left(\frac{p}{m}\right)^{n+1}=1-\left(1-\frac{\nu_0(h)}{m}\right)^{n+1}\le \frac{(n+1)\nu_0(h)}{m}$$
and $\frac{1}{p}\le \frac{2}{m}$. Using (\ref{izumi}), we obtain
$$ I-I_m\le \frac{(n+1)\nu_0(h)}{m}\cdot \frac{c_1}{b_0}\mathrm{vol}(\nu_0)A_{X,\Delta}(\nu)+\frac{2\mathrm{vol}(\nu_0)A_{X,\Delta}(\nu)}{m}.$$
We let $a_0:=2\nu_0(h)$.
\end{proof}

\subsection{Generic limit argument and proof of the main theorem}
In this subsection, we prove Theorem \ref{누영이} using the generic limit argument. First, we prove three lemmas that we need.

\begin{lemma} \label{듸듸<3}
Let $x\in X:=\Spec X$ be a closed point of an $n$-dimensional affine normal variety, and let $\nu_0$ be a valuation centered on $x$. For a fixed $m>0$ and a fixed ideal $I\subseteq R$, define
$$ V_m(I):=\limsup_{q\to \infty}\frac{\ell((I^q+\mathfrak{a}_{mq}(\nu_0))/\mathfrak{a}_{mq}(\nu_0))}{\frac{(mq)^n}{n!}}.$$
Then, for every $N>m$, we obtain that
$$ 0\le V_m(I+\mathfrak{a}_{N}(\nu_0))-V_m(I)\le \mathrm{vol}(\nu_0)\left(1-\left(1-\frac{m}{N}\right)^n\right).$$
\end{lemma}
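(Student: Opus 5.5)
The lower bound $0\le V_m(I+\mathfrak{a}_N(\nu_0))-V_m(I)$ follows from monotonicity: $I\subseteq I+\mathfrak{a}_N(\nu_0)$ gives $I^q\subseteq (I+\mathfrak{a}_N(\nu_0))^q$, so the lengths compare termwise for every $q$, and hence so do their $\limsup$'s.

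For the upper bound, I would expand $(I+\mathfrak{a}_N(\nu_0))^q=\sum_{j=0}^q I^{q-j}\mathfrak{a}_N(\nu_0)^j$. Set $k:=\ceil{mq/N}$. Every term with $j\ge k$ lies in $\mathfrak{a}_{Nj}(\nu_0)\subseteq \mathfrak{a}_{mq}(\nu_0)$, so it vanishes in the quotient. Every term with $j<k$ lies in $I^{q-j}\subseteq I^{q-k+1}$. Hence
$$ (I+\mathfrak{a}_N(\nu_0))^q+\mathfrak{a}_{mq}(\nu_0)\subseteq I^{q-k+1}+\mathfrak{a}_{mq}(\nu_0). $$
Write $q':=q-k+1\ge q(1-m/N)$. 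The remaining task is to compare $\ell\bigl((I^{q'}+\mathfrak{a}_{mq}(\nu_0))/\mathfrak{a}_{mq}(\nu_0)\bigr)$ with the quantity at level $q'$. I would split along the chain $\mathfrak{a}_{mq}(\nu_0)\subseteq \mathfrak{a}_{mq'}(\nu_0)$:
$$ \ell\Bigl(\tfrac{I^{q'}+\mathfrak{a}_{mq}(\nu_0)}{\mathfrak{a}_{mq}(\nu_0)}\Bigr)\le \ell\Bigl(\tfrac{I^{q'}+\mathfrak{a}_{mq'}(\nu_0)}{\mathfrak{a}_{mq'}(\nu_0)}\Bigr)+\ell\bigl(\mathfrak{a}_{mq'}(\nu_0)/\mathfrak{a}_{mq}(\nu_0)\bigr). $$
This holds because the kernel of the surjection from the left module onto the first module on the right is contained in $\mathfrak{a}_{mq'}(\nu_0)/\mathfrak{a}_{mq}(\nu_0)$.

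Now divide by $(mq)^n/n!$ and take $\limsup$. The first term is at most $(q'/q)^n$ times the normalized quantity at level $q'$, so its contribution is bounded by $V_m(I)$, using $q'\le q$ and $q'\to\infty$. The second term equals
$$ \frac{\ell(R/\mathfrak{a}_{mq}(\nu_0))-\ell(R/\mathfrak{a}_{mq'}(\nu_0))}{(mq)^n/n!}. $$
Since $\nu_0$ is centered at $x$ with $A_{X,\Delta}(\nu_0)<\infty$, the volume exists as a genuine limit, as used in the proof of Lemma \ref{3.2}. Therefore this difference tends to $\mathrm{vol}(\nu_0)\bigl(1-\lim (q'/q)^n\bigr)=\mathrm{vol}(\nu_0)\bigl(1-(1-m/N)^n\bigr)$, since $q'/q\to 1-m/N$. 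Adding the two contributions gives the stated upper bound.

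The main technical point is the $\limsup$ bookkeeping. The first term is evaluated along the reindexed sequence $q'(q)$, which is nondecreasing in $q$ and tends to infinity, so its $\limsup$ is at most $V_m(I)$. For the second term we need the limit, not just a $\limsup$, of $\ell(R/\mathfrak{a}_{r}(\nu_0))/(r^n/n!)$. I would invoke the known existence of the volume limit for valuations centered at $x$, exactly as in the proof of Lemma \ref{3.2}.
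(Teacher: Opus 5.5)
Your proof is correct and is essentially the paper's argument: the same expansion of $(I+\mathfrak{a}_N(\nu_0))^q$ with cutoff $\ceil{mq/N}$, the same inclusion into $I^{q'}+\mathfrak{a}_{mq}(\nu_0)$ with $q'=q-\ceil{mq/N}+1$, and the same length splitting along $\mathfrak{a}_{mq}(\nu_0)\subseteq\mathfrak{a}_{mq'}(\nu_0)$, using that $\mathrm{vol}(\nu_0)$ exists as a limit. The only differences are minor. You bound the first term by $V_m(I)$ instead of the paper's sharper $(1-\frac{m}{N})^nV_m(I)$, which still suffices. You spell out the kernel argument behind the splitting, which the paper leaves implicit. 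And the volume limit you invoke holds for any valuation centered at $x$, because the $\mathfrak{a}_r(\nu_0)$ are $\mathfrak{m}_x$-primary, so the hypothesis $A_{X,\Delta}(\nu_0)<\infty$ is not needed there.
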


\begin{proof}
One can see $V_m(I)\le V_m(I+\mathfrak{a}_{N}(\nu_0))$ by $I\subseteq I+\mathfrak{a}_N(\nu_0)$. Let us prove the reversed inequality. For each $q\ge 1$, define $r_q:=\ceil{\frac{mq}{N}}$ and $s_q:=q-r_q+1$. Since $N>m$, if $q\gg 1$, then $s_q\ge 1$, and $\frac{s_q}{q}\to 1-\frac{m}{N}$.

\smallskip

Let us claim $(I+\mathfrak{a}_{N}(\nu_0))^q\subseteq I^{s_q}+\mathfrak{a}_{mq}(\nu_0)$. Indeed, we obtain
$$ (I+\mathfrak{a}_N(\nu_0))^q=\sum^q_{r=0}I^{q-r}\mathfrak{a}_{N}(\nu_0)^r=\sum^{r_q-1}_{r=0}I^{q-r}\mathfrak{a}_{N}(\nu_0)^r+\sum^q_{r=r_q}I^{q-r}\mathfrak{a}_{N}(\nu_0)^r.$$
Moreover, if $r\ge r_q$, then $Nr\ge mq$, and therefore
$$ I^{q-r}\mathfrak{a}_{N}(\nu_0)^r\subseteq \mathfrak{a}_{Nr}(\nu_0)\subseteq \mathfrak{a}_{mq}(\nu_0).$$
If $r<r_q$, then $q-r\ge s_q$, and thus $I^{q-r}\subseteq I^{s_q}$. Thus, $I^{q-r}\mathfrak{a}_{N}(\nu_0)^r\subseteq I^{q-r}\subseteq I^{s_q}$.
Hence,
$$ \ell_R\left(\frac{(I+\mathfrak{a}_{N}(\nu_0))^q+\mathfrak{a}_{mq}(\nu_0)}{\mathfrak{a}_{mq}(\nu_0)}\right)\le \ell_R\left(\frac{I^{s_q}+\mathfrak{a}_{ms_q}(\nu_0)}{\mathfrak{a}_{ms_q}(\nu_0)}\right)+\ell_R(\mathfrak{a}_{ms_q}(\nu_0)/\mathfrak{a}_{mq}(\nu_0)).$$
Note that
$$ \limsup_{q\to \infty}\frac{\ell_R\left(\frac{I^{s_q}+\mathfrak{a}_{ms_q}(\nu_0)}{\mathfrak{a}_{ms_q}(\nu_0)}\right)}{\frac{(mq)^n}{n!}}\le \left(1-\frac{m}{N}\right)^nV_m(I).$$
It remains to estimate $\ell_R(\mathfrak{a}_{ms_q}(\nu_0)/\mathfrak{a}_{mq}(\nu_0))$. Note that
$$ \ell_R(\mathfrak{a}_{ms_q}(\nu_0)/\mathfrak{a}_{mq}(\nu_0))=\ell_R(R/\mathfrak{a}_{mq}(\nu_0))-\ell_R(R/\mathfrak{a}_{ms_q}(\nu_0)).$$
Hence, dividing both sides by $\frac{(mq)^n}{n!}$ and letting $q\to \infty$ gives
$$ \lim_{q\to \infty}\frac{\ell_R(\mathfrak{a}_{ms_q}(\nu_0)/\mathfrak{a}_{mq}(\nu_0))}{\frac{(mq)^n}{n!}}\to \left(1-\left(1-\frac{m}{N}\right)^n\right)\mathrm{vol}(\nu_0).$$
We complete the proof.
\end{proof}

\begin{lemma} \label{lemma}
Let $X$ be a noetherian scheme, $I\subseteq \Z_{>0}$ an infinite set, and $x_i\in X$ for $i\in I$ be a point.

\smallskip

Then, there exist an infinite subset $I'\subseteq I$ and an integral closed subscheme $Z\subseteq X$ such that $x_i\in Z$ for all $i\in I'$, and every proper closed subset $Y\subsetneq Z$ contains $x_i$ for only finitely many $i\in I'$.
\end{lemma}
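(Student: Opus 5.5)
We argue by Noetherian induction on the closed subset containing infinitely many of the $x_i$. Consider the family $\mathcal{S}$ of all pairs $(Z,I')$ where $Z\subseteq X$ is a closed subset and $I'\subseteq I$ is an infinite subset with $x_i\in Z$ for all $i\in I'$. This family is nonempty, since $(X,I)$ belongs to it. Because $X$ is noetherian, closed subsets satisfy the descending chain condition. Hence we may choose $(Z,I')\in\mathcal{S}$ with $Z$ minimal among all closed subsets appearing in $\mathcal{S}$.

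We first show that $Z$ may be taken irreducible. Write $Z=Z_1\cup\cdots\cup Z_s$ as the union of its finitely many irreducible components, which is possible since $X$ is noetherian. Since $I'$ is infinite, some $Z_j$ contains $x_i$ for infinitely many $i\in I'$; let $I''\subseteq I'$ be the set of such indices. Then $(Z_j,I'')\in\mathcal{S}$ and $Z_j\subseteq Z$, so minimality forces $Z_j=Z$. Thus $Z$ is irreducible, and we give it the reduced induced subscheme structure, so $Z$ is an integral closed subscheme. We replace $I'$ by $I''$; note that $(Z,I'')$ is still minimal, because minimality is a condition on $Z$ alone.

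Next we verify the required property. Let $Y\subsetneq Z$ be a proper closed subset, and suppose $Y$ contained $x_i$ for infinitely many $i\in I'$. Then, writing $J$ for this infinite set of indices, $(Y,J)\in\mathcal{S}$ with $Y\subsetneq Z$, contradicting the minimality of $Z$. Hence every proper closed subset $Y\subsetneq Z$ contains $x_i$ for only finitely many $i\in I'$.

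The only point needing care is choosing the minimal element correctly. Minimality has to be taken over all closed sets that contain infinitely many of the points from some infinite subfamily, rather than over a fixed index set. This is what allows us to pass to subsets of indices freely while keeping $Z$ minimal. There is no genuine obstacle beyond this.
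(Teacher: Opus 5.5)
Your proof is correct and follows essentially the same idea as the paper. The paper argues by contradiction: assuming the conclusion fails, it repeatedly passes to an irreducible component of a proper closed subset that contains infinitely many of the $x_i$, producing a strictly descending chain $Z\supsetneq Z_1\supsetneq Z_2\supsetneq\cdots$ that violates noetherianity. You instead use the equivalent minimal-element form of the descending chain condition directly, which is slightly cleaner and also handles the irreducibility of $Z$ explicitly (the paper starts its chain with $Z=X$, which need not be integral).
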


\begin{proof}
Let us use the contradiction. Suppose the lemma is false. Then, for every infinite subset $I'\subseteq I$ and every integral closed reduced subscheme $Z\subseteq X$ such that $x_i\in Z$ for $i\in I'$, there exists a proper closed subset $Z'\subsetneq Z$ such that $\{i\in I'\mid x_i\in Z'\}$ is an infinite set. Let us apply the property to $I_0:=\Z_{>0}$ and $Z=X$. Let $Z_1$ be an irreducible component of $Z'$, and $I_1:=I'$. Applying the property to $Z_1$ and $I_1$ gives $Z_2\subsetneq Z_1\subsetneq Z$. By repeating this argument, we obtain that
$$ \cdots \subsetneq Z_n\subsetneq \cdots \subsetneq Z_2\subsetneq Z_1\subsetneq Z,$$
which contradicts the property that $X$ is noetherian.
\end{proof}

We believe that the following lemmas are well-known to experts (cf. \cite[Section 3]{Pra88}), and we include the proofs for the ease of the readers.

\begin{lemma}[{cf. \cite[Section 3]{Pra88}}] \label{duidui}
Let $X$ be a noetherian scheme, and let $\varphi:\mathcal{E}\to \mathcal{E}'$ be a morphism of vector bundles on $X$. Then, for an integer $r\ge 0$,
$$ \{x\in X\mid \mathrm{rank}_{\kappa(x)}(\varphi|_x)\le r\}$$
is a closed set.
\end{lemma}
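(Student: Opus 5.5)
The rank of $\varphi$ at a point drops exactly when certain minors vanish, so the plan is to exhibit the set as the zero locus of an ideal of minors, working locally. Being closed is local on $X$, so I first cover $X$ by open affines $U=\Spec A$ on which both $\mathcal{E}$ and $\mathcal{E}'$ are free, say of ranks $a$ and $b$. It then suffices to show that the intersection of the set with each such $U$ is closed in $U$.

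On such a $U$, after choosing trivializations, $\varphi|_U$ is given by a $b\times a$ matrix $M=(m_{ij})$ with entries $m_{ij}\in A$. For a point $x\in U$, the fibre map $\varphi|_x:\kappa(x)^a\to\kappa(x)^b$ is given by the image $\overline{M}$ of $M$ in $\kappa(x)$. This reduction commutes with taking minors, because determinants are polynomial in the entries.

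Next I apply the linear-algebra fact that a matrix over a field has rank at most $r$ if and only if all of its $(r+1)\times(r+1)$ minors vanish. If $r\ge\min(a,b)$ there are no such minors, and the set is all of $U$, which is closed. Otherwise, let $I_{r+1}(M)\subseteq A$ be the ideal generated by the $(r+1)\times(r+1)$ minors of $M$. A minor vanishes in $\kappa(x)$ exactly when it lies in the prime $\mathfrak{p}_x$. Hence the set equals
$$\{x\in U\mid \mathrm{rank}(\varphi|_x)\le r\}=V(I_{r+1}(M)),$$
which is closed in $U$.

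The only point needing care is that this description does not depend on the chosen trivializations. I can either note that $I_{r+1}(M)$ changes only by multiplication by invertible matrices, which preserves the Fitting-type ideal, or simply observe that the set itself is defined intrinsically, so its intersection with each $U$ is what it is regardless of the trivialization. Since the set is closed in every member of an open cover, it is closed in $X$. I expect no real obstacle here, and noetherianity is not needed.
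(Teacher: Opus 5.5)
Your proposal is correct and follows the paper's proof essentially verbatim: reduce to an affine open on which both bundles are free, represent $\varphi$ by a matrix $M$, and identify the locus as $V(I_{r+1}(M))$ via vanishing of the $(r+1)\times(r+1)$ minors. Your extra remarks (the degenerate case $r\ge\min(a,b)$, independence of the trivialization, and that noetherianity is unnecessary) are accurate refinements the paper leaves implicit.
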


Here, $\varphi|_x$ means the pullback of $\varphi$ along $\Spec \kappa(x)\hookrightarrow X$

\begin{proof}
The assertion is local on $X$, so we may assume $X:=\Spec A$ is affine, $\mathcal{E}\cong \mathcal{O}^{\otimes e}_X$, and $\mathcal{E}'\cong \mathcal{O}^{\otimes f}_X$. We may represent $\varphi$ by a matrix
$$ M=(m_{ij})\in \mathrm{Mat}_{f\times e}(A).$$
Let $x\in X$ correspond to a prime ideal $\mathfrak{p}_x\subseteq A$. The fiber map $\varphi|_x$ is represented by $M(x):=(m_{ij}\otimes_A \kappa(x))$.

\smallskip

Note that $\mathrm{rank}_{\kappa(x)}M(x)\le r$ if and only if every $(r+1)\times (r+1)$-minor of $M(x)$ vanishes. Equivalently, if $I_{r+1}(M)\subseteq A$ denotes the ideal generated by the $(r+1)\times (r+1)$-minors of $M$, then
$$ \mathrm{rank}_{\kappa(z)}M(x)\le r\iff I_{r+1}(M)\subseteq \mathfrak{p}_x.$$
Thus,
$$ \{x\in X\mid \mathrm{rank}_{\kappa(x)}(\varphi|_x)\le r\}=V(I_{r+1}(M)),$$
and it is closed.
\end{proof}

\begin{lemma} \label{duidui2}
Let $X$ be a noetherian scheme, and $\phi:\mathcal{F}\to \mathcal{F}'$ be a map of sheaves, where $\mathcal{F}$ is coherent and $\mathcal{F}'$ is a vector bundle.

\smallskip

Then, the set $\{x\in X\mid \phi|_x=0\}$ is closed.
\end{lemma}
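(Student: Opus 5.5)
The plan is to reduce to an affine local statement and handle $\phi$ on a free presentation of $\mathcal{F}$. Closedness is local on $X$, so it suffices to check it on an affine open cover. We may therefore assume $X=\Spec A$ is affine and $\mathcal{F}'\cong \mathcal{O}^{\oplus f}_X$ is free. Since $\mathcal{F}$ is coherent and $X$ is noetherian, $\mathcal{F}$ corresponds to a finitely generated $A$-module $F$. We then choose a surjection $\pi:\mathcal{O}^{\oplus e}_X\to \mathcal{F}$.

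The key observation is that pulling back along $\Spec\kappa(x)\to X$ is right exact. Hence $\pi|_x:\kappa(x)^{\oplus e}\to \mathcal{F}\otimes\kappa(x)$ is still surjective. It follows that $\phi|_x=0$ if and only if $(\phi\circ\pi)|_x=0$.

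This reduces the problem to the morphism of vector bundles $\psi:=\phi\circ\pi:\mathcal{O}^{\oplus e}_X\to \mathcal{O}^{\oplus f}_X$. For $\psi$ we have
$$\{x\in X\mid \phi|_x=0\}=\{x\in X\mid \mathrm{rank}_{\kappa(x)}(\psi|_x)\le 0\}.$$
The right-hand side is closed by Lemma \ref{duidui} applied with $r=0$. Concretely, it equals $V(I)$, where $I$ is the ideal generated by the entries of the matrix of $\psi$.

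Finally, the sets obtained on the affine opens $U$ of the cover are each closed in $U$. They glue, because the condition $\phi|_x=0$ depends only on the point $x$ and not on the chart. A subset whose intersection with every member of an open cover is closed is itself closed, which gives the statement.

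There is no real obstacle here. The one point to check carefully is the right exactness of $-\otimes_A\kappa(x)$, which is what makes the surjectivity of $\pi|_x$ hold, so that vanishing of $\phi|_x$ can be tested after composing with $\pi$.
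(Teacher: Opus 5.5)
Your proof is correct, but it takes a different route from the paper's. The paper works with the cokernel. It observes that $\phi|_x=0$ if and only if $\dim_{\kappa(x)}\mathrm{coker}(\phi|_x)\ge \mathrm{rank}\,\mathcal{F}'$, and then applies upper semicontinuity of $x\mapsto \dim_{\kappa(x)}(\mathrm{coker}\,\phi)\otimes\kappa(x)$ for the coherent sheaf $\mathrm{coker}\,\phi$ \cite[Exercise 2.5.8(a)]{Har77}. You instead precompose with a local free cover $\pi$ of $\mathcal{F}$, use right exactness to keep $\pi|_x$ surjective, and so reduce to the vanishing of a matrix, which is the case $r=0$ of Lemma \ref{duidui}.

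The two arguments cut out the same set. Locally, $\mathrm{coker}\,\phi$ has the presentation $A^{\oplus e}\xrightarrow{\psi}A^{\oplus f}$, since $\mathrm{im}\,\phi=\mathrm{im}\,\psi$. Your ideal of entries of $\psi$ is then exactly the Fitting ideal $\mathrm{Fitt}_{f-1}(\mathrm{coker}\,\phi)$, which defines the locus where the fiber of the cokernel has dimension $\ge f$. So your argument is essentially the Fitting-ideal proof of semicontinuity, specialized to this case.

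What your version gains is self-containment: it needs only Lemma \ref{duidui} and right exactness, with no external Nakayama-type semicontinuity result. It also produces an explicit local defining ideal, and hence a natural closed subscheme structure. What the paper's version gains is brevity and a presentation-free formulation. However, it tacitly relies on two facts your local reduction handles automatically: the same right exactness, to identify $\mathrm{coker}(\phi|_x)$ with $(\mathrm{coker}\,\phi)\otimes\kappa(x)$, and the local constancy of $\mathrm{rank}\,\mathcal{F}'$.
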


\begin{proof}
Note that
$$ \dim_{\kappa}\mathrm{coker}\,\phi|_x=\dim \mathcal{F}'-\mathrm{rank}(\phi|_x),$$
so
$$ \phi|_x\iff \dim_{\kappa(x)}\mathrm{coker}\,\phi|_x\ge \dim \mathcal{F}'.$$
Moreover, the map $x\mapsto \dim_{\kappa(x)}(\mathrm{coker}\,\phi|_x)$ is upper semicontinuous by \cite[Exercise 2.5.8(a)]{Har77}. Hence, our set is closed.
\end{proof}

\begin{proof}[Proof of Theorem \ref{누영이}]
Let $N:=N(\mathbb{T})=\mathrm{Hom}(\mathbb{G}_m,\mathbb{T})$ be the co-weight lattice and $M:=N^*$ the weight lattice. We have a weight decomposition
$$ R=\bigoplus_{\alpha\in M}R_{\alpha}, \,\,\,R_0=k,\,\,\, \mathfrak{m}_x=\bigoplus_{\alpha \ne 0}R_{\alpha}.$$

\smallskip

\noindent \textbf{Step 1.} By Izumi's inequality (cf. \cite[Proposition 7.45]{Xu25}), there exists $a,b>0$ such that
\begin{equation} \label{yugang}
\mathfrak{m}^{\ceil{\frac{N}{a}}}_x\subseteq \mathfrak{a}_N(\nu_0)\subseteq \mathfrak{m}^{\ceil{\frac{N}{b}}}_x
\end{equation}
Thus, the $\mathfrak{a}_N(\nu_0)$-adic topology is the same as the $\mathfrak{m}_x$-adic topology, and
$$ \widehat{R}:=\lim_{\substack{\longleftarrow \\ N}}R/\mathfrak{a}_N(\nu_0)=\prod_{\alpha\in M}R_{\alpha}.$$
Let $P_d:=\frac{1}{d!}\Z_{\ge 0}\cap [0,d]$.

\smallskip

Let $\nu_i$ be a sequence of valuations in $\mathrm{Val}^{\mathbb{T}}_{X,\ni x}$ such that
\begin{itemize}
    \item $A_{X,\Delta}(\nu_i)=1$, and
    \item $\frac{A_{X,\Delta}(\nu_i)}{S(\nu_0;\nu_i)}\to \delta(X,\Delta,\nu_0)$.
\end{itemize}
For each $d$ and $i,p\in \Q_{\ge 0},N$, we denote by
$$ I_{i,p,N}:=\frac{\mathfrak{a}_{p}(\nu_i)+\mathfrak{a}_N(\nu_0)}{\mathfrak{a}_N(\nu_0)}.$$
Let $L_1\subseteq \Z_{>0}$ be a set such that for every $i\in L_1$, $p\in P_1$, and $1\le N\le 1$,
$$ r_{p,N}:=\dim_k I_{i,p,N} $$
is constant. Let $L_2\subseteq L_1$ be a set such that for every $i\in L_2$, $p\in P_2$, and $1\le N\le 2$, $r_{p,N}$ is constant. Repeating this, we can construct a sequence of sets $\cdots\subseteq L_3\subseteq L_2\subseteq L_1$ such that if $i\in L_d$, then $r_{p,N}$ is constant for every $p\in P_d$ and every $1\le N\le d$. Choose $i_d\in L_d$ with $i_1<i_2<i_3<\cdots$ and replace $i$ with $i_d$; we may assume that
$$ p\in P_d,\,\,\, N\le d,\,\,\, i\ge d\implies \dim I_{i,p,N}=r_{p,N}.$$

\smallskip

\noindent \textbf{Step 2.} In this step, we construct a generic limit of $\nu_i$. Define
$$ H_d:=\prod_{p\in P_d, 1\le N\le d}\mathrm{Gr}(r_{p,N},R/\mathfrak{a}_N(\nu_0))^{\mathbb{T}},$$
where $\mathrm{Gr}(r_{p,N},R/\mathfrak{a}_N(\nu_0))^{\mathbb{T}}$ denotes the moduli of $\mathbb{T}$-invariant $r$-dimensional subspaces. We consider the tautological subbundle $\mathcal{I}^{(d)}_{p,N}\subseteq (R/\mathfrak{a}_N(\nu_0))\otimes_k \mathcal{O}_{H_d}$. Let $z\in H_d$. Let us consider the following conditions:
\begin{itemize}
    \item[(1)] (decreasing property) $(\mathcal{I}^{(d)}_{q,N})_z\subseteq (\mathcal{I}^{(d)}_{p,N})_z$ for $p\le q$.
    \item[(2)] (idealhood) $(R/\mathfrak{a}_N(\nu_0))\cdot (\mathcal{I}^{(d)}_{p,N})_z\subseteq (\mathcal{I}^{(d)}_{p,N})_z$.
    \item[(3)] (multiplicativity) $(\mathcal{I}^{(d)}_{p,N})_z(\mathcal{I}^{(d)}_{q,N})_z\subseteq (\mathcal{I}^{(d)}_{p+q,N})_z$ with $p,q,p+q\in P_d$, and
    \item[(4)] (compatibility condition) $(R/\mathfrak{a}_{N'}(\nu_0)\to R/\mathfrak{a}_N(\nu_0))(\mathcal{I}^{(d)}_{p,N'})\subseteq \mathcal{I}^{(d)}_{p,N}$ for every $N\le N'\le d$.
\end{itemize}
We define
$$ J_d:=\{z\in H_d\mid (\mathcal{I}^{(d)}_{p,N})_z\text{ satisfies (1), (2), (3) and (4) for all }p,q\in P_d\}.$$
Then, $J_d\subseteq H_d$ is closed. Indeed, for (1), (2), (3), and (4), equivalent conditions are
\begin{itemize}
    \item[(1')] the composition of 
    $$\mathcal{I}^{(d)}_{q,N}\hookrightarrow (R/\mathfrak{a}_N(\nu_0))\otimes \mathcal{O}_{H_d}\to ((R/\mathfrak{a}_N(\nu_0))\otimes \mathcal{O}_{H_d})/\mathcal{I}^{(d)}_{p,N}$$
    is a zero map at $z$,
    \item[(2')] the map
    $$(R/\mathfrak{a}_N(\nu_0))\otimes \mathcal{I}^{(d)}_{p,N}\to (R/\mathfrak{a}_N(\nu_0)\otimes \mathcal{O}_{H_d})/\mathcal{I}^{(d)}_{p,N},$$ 
    $a\otimes f\mapsto af \,\,\,\mathrm{mod}\,\mathcal{I}^{(d)}_{p,N}$ is a zero map at $z$,
    \item[(3')] the map 
    $$\mathcal{I}^{(d)}_{p,N}\otimes \mathcal{I}^{(d)}_{q,N}\to (R/\mathfrak{a}_N(\nu_0)\otimes \mathcal{O}_{H_d})/\mathcal{I}^{(d)}_{p+q,N}$$
    is a zero map at $z$, and
    \item[(4')] the composition of
    $$ \mathcal{I}^{(d)}_{p,N'}\hookrightarrow (R/\mathfrak{a}_{N'}(\nu_0))\otimes \mathcal{O}_{H_d}\to (R/\mathfrak{a}_N(\nu_0))\otimes \mathcal{O}_{H_d}\to ((R/\mathfrak{a}_N(\nu_0))\otimes \mathcal{O}_{H_d})/\mathcal{I}^{(d)}_{p,N}$$
    is a zero map at $z$.
\end{itemize}
Note that since $\mathcal{I}^{(d)}_{p,N}$ is a tautological subbundle of a Grassmannian, $((R/\mathfrak{a}_N(\nu_0))\otimes \mathcal{O}_{H_d})/\mathcal{I}^{(d)}_{p,N}$ is thus a vector bundle on $H_d$ (cf. \cite[Proposition 8.10 (iv) $\implies$ (ii)]{GW20}). Hence, 
$$ ((R/\mathfrak{a}_N(\nu_0))\otimes \mathcal{O}_{H_d})/\mathcal{I}^{(d)}_{p,N}|_z=((R/\mathfrak{a}_N(\nu_0)\otimes_k \kappa(z))/(\mathcal{I}^{(d)}_{p,N}|_z).$$
We can apply Lemma \ref{duidui2} to deduce the closedness of the conditions.

\smallskip

We denote by $\mathcal{I}^{(d)}_{p,N}$ the restriction of $\mathcal{I}^{(d)}_{p,N}$ to $J_d$ as an abuse of notation. Define the transition map by $\phi^{(d)}_{p,N}:\mathcal{I}^{(d)}_{p,N+1}\to \mathcal{I}^{(d)}_{p,N}$, and we define
$$ J^{\circ}_d:=\bigcap_{\substack{p\in P_d \\ 1\le N<d}}\{z\in J_d\mid (\phi^{(d)}_{p,N})_z\text{ is surjective}\}.$$
Then, we know that $J^{\circ}_d\subseteq J_d$ is open. Indeed, if we let $\mathcal{C}^{(d)}_{p,N}$ be the cokernel of $\phi^{(d)}_{p,N}$, then $(\phi^{(d)}_{p,N})_z$ is surjective if and only if $z\notin \Supp \mathcal{C}^{(d)}_{p,N}$.

\smallskip

We combine the idea of constructing $k_0$ from \cite[Proof of Theorem 4.1]{Tot12} and the simultaneous descent of the countable collection of valuative ideals argument from \cite[Proof of Proposition 3.9]{LX20}. Let $k_0\subseteq k$ be an algebraically closed, countable field such that
\begin{itemize}
    \item there exists a log Fano cone singularity $x_0\in (X_0:=\Spec R_0,\Delta_0;\mathbb{T}_0)$ over $k_0$ such that $(X_0,\Delta_0;\mathbb{T}_0)\times_{k_0}k=(X,\Delta;\mathbb{T})$,
    \item There exist valuations $\nu_{i0}\in \mathrm{Val}_{X_0}$ and $z_{i,d}\in \left(I_{i,p,N,0}\right)_{p\in P_d,1\le N\le d}\in J^{\circ}_{d0}(k_0)$ such that $\mathfrak{a}_{p}(\nu_{i0})\otimes_{k_0} k=\mathfrak{a}_{p}(\nu_i)$ and $I_{i,p,N,0}\otimes_{k_0}k=I_{i,p,N}$, and
    \item There exist $\nu_{00}\in \mathrm{Val}_{X_0}$ such that $\mathfrak{a}_p(\nu_{00})\otimes_{k_0}k=\mathfrak{a}_p(\nu_0)$.
\end{itemize}
Note that we can construct $J_{d0}$, $J^{\circ}_{d0}$, $\phi_{d',d,0}:J_{d'0}\to J_{d0}$, and $\mathcal{I}^{(d)}_{p,N,0}\subseteq R_{0}/\mathfrak{a}_N(\nu_{00})\otimes_{k_0}\mathcal{O}_{J_{d0}}$ over $k_0$ similarly to $J_{d}$, $J^{\circ}_{d}$, $\phi_{d',d}:J_{d'}\to J_{d}$, and $\mathcal{I}^{(d)}_{p,N}$.

\smallskip

We may choose infinite subsets $\Z_{>0}\supseteq I_0\supseteq I_1\supseteq \cdots$ such that for each $d$, $I_d\cap \{1,2,\cdots,d\}=\varnothing$ and the closed set
$$ Z_{d0}:=\overline{\{z_{i,d}\mid i\in I_d\}}\subseteq J_{d0} $$
satisfies the property:
\begin{equation} \label{namseong-li}
\begin{aligned}\text{ If }Y\subsetneq Z_{d0}&\text{ is a closed set, }\\ &\text{there are only finitely many }i\in I_d\text{ such that }\pi_d(z_i)\in Y.
\end{aligned}
\end{equation}
This is possible by Lemma \ref{lemma} For the transition morphism $\varphi_{d+1,d,0}:J_{(d+1)0}\to J_{d0}$, we have that $\varphi_{d+1,d,0}(z_{i,d+1})=z_{i,d}$. Moreover, $I_{d+1}\subseteq I_d$. Thus, $\varphi_{d+1,d,0}(Z_{(d+1)0})\subseteq Z_{d0}$. Let us assume that the closure of the image $Y$ is a proper closed subset of $Z_{d0}$. Then, for every $i\in I_{d+1}$, $z_{i,d}\in Y$, contradicting to (\ref{namseong-li}). Thus, $Z_{d+1}\to Z_d$ is dominant. Moreover, since every $z_{i,d}$ is in $J^{\circ}_{d0}$, $J^{\circ}_{d0}\cap Z_d$ is a nonempty open subset of $Z_d$.

\smallskip

Let $\eta_d$ be the generic point of $Z_{d0}$, and let $K_d:=\kappa(\eta_d)$. Then, we obtain $K_d\subseteq K_{d+1}$. We denote the generic fiber of $\mathcal{I}^{(d)}_{p,N,0}$ by $I^{(d)}_{p,N,0}\subseteq R_{0}/\mathfrak{a}_N(\nu_{00})\otimes_{k_0} K_d$. Then, we deduce that
$$ I^{(d+1)}_{p,N,0}=I^{(d)}_{p,N,0}\otimes_{K_d}K_{d+1}$$
for $p\in P_d,1\le N\le d$. Let $K_{\infty}:=\bigcup_d K_d$. Choose $d$ such that $p\in P_d$ and $N\le d$, and let
$$ I^{(\infty)}_{p,N,0}:=I^{(d)}_{p,N,0}\otimes_{K_d} K_{\infty}\subseteq (R_{0}/\mathfrak{a}_N(\nu_{00}))\otimes_{k_0} K_{\infty}.$$
Choose any embedding $K_{\infty}\hookrightarrow k$ and define
$$ I^{(\infty)}_{p,N}:=I^{(\infty)}_{p,N,0}\otimes_{K_{\infty}}k\subseteq R_{0}/\mathfrak{a}_N(\nu_{00})\otimes_{k_0}k=R/\mathfrak{a}_N(\nu_{0}).$$
Define
$$ \widehat{\mathcal{F}_{\mathbb{Q}}}^p:=\lim_{\substack{\longleftarrow \\  N}}I^{(\infty)}_{p,N}\subseteq \widehat{R}.$$
By construction, every $f\in \widehat{\mathcal{F}_{\mathbb{Q}}}^p$ can be written as
$$ f=\sum_{\alpha}f_{\alpha},\,\,\,f_{\alpha}\in R_{\alpha}$$
with $f_{\alpha}\in \widehat{\mathcal{F}_{\mathbb{Q}}}^p$; thus,
$$ \mathcal{F}^p_{\mathbb{Q}}:=\bigoplus_{\alpha}(\widehat{\mathcal{F}_{\mathbb{Q}}}^p\cap R_{\alpha})\subseteq R$$
is a $\mathbb{T}$-invariant ideal. Then, we can see $\widehat{\mathcal{F}}^p_{\mathbb{Q}}=\mathcal{F}^p_{\mathbb{Q}}\widehat{R}$. We define that for a positive real number $t$,
$$ \mathcal{F}^t:=\bigcap_{\substack{p<t \\ p\in \Q}}\mathcal{F}^p_{\mathbb{Q}}.$$
We see that $\mathcal{F}^{\bullet}$ is a $\mathbb{T}$-invariant filtration on $R$.

\smallskip

\noindent \textbf{Step 3.} The goal of this step is to prove (\ref{D_m}). Fix $p,N$ and choose $d\ge N$ such that $p\in P_d$. Let $q$ be an integer such that $mq\ge N$. Then, there exists $\pi_{mq,N}:R_{0}/\mathfrak{a}_{mq}(\nu_{00})\to R_{0}/\mathfrak{a}_N(\nu_{00})$, and define
$$ \tilde{\mathcal{I}}^{(d)}_{p,N,mq}:=(\pi_{mq,N}\times \mathrm{id}_{Z_{d0}})^{-1}(\mathcal{I}^{(d)}_{p,N,0})\subseteq R_{0}/\mathfrak{a}_{mq}(\nu_{00})\otimes_{k_0}\mathcal{O}_{Z_{d0}}.$$
Note that
\begin{equation} \label{vector}
\tilde{\mathcal{I}}^{(d)}_{p,N,mq}\text{ is a vector bundle on }Z_{d0}.
\end{equation}
Indeed, there exists an exact sequence of $\mathcal{O}_{Z_{d0}}$-modules
\begin{equation} \label{ex}
0\to (\mathfrak{a}_N(\nu_{00})/\mathfrak{a}_{mq}(\nu_{00}))\otimes \mathcal{O}_{Z_{d0}}\to \tilde{\mathcal{I}}^{(d)}_{p,N,mq}\to \mathcal{I}^{(d)}_{p,N,0}\to 0,
\end{equation}
(\ref{ex}) splits as $\mathcal{O}_{Z_{d0}}$-modules by a $k_0$-vector space section $R_{0}/\mathfrak{a}_{N}(\nu_{00})\to R_{0}/\mathfrak{a}_{mq}(\nu_{00})$, and $\mathcal{I}^{(d)}_{p,N,0}$ is a direct sum of finitely many tautological subbundles.

\smallskip

Let us define
$$ J^{(d)}_{p,N,0}:=\left(R_{0}\otimes_{k_0}K_d\to \frac{R_{0}\otimes_{k_0}K_d}{\mathfrak{a}_N(\nu_{00})\otimes_{k_0}K_d}\right)^{-1}\left(I^{(d)}_{p,N,0}\right)\subseteq R_{0}\otimes_{k_0}K_d,$$
and $J^{(\infty)}_{p,N,0}\subseteq R_{0}\otimes_{k_0}K_{\infty}$ similarly. Consider
\begin{equation}\label{jukdodong}
\tilde{\mathcal{I}}^{(d)}_{p,N,mq}|_z=((\pi_{mq,N}\times \mathrm{id}_{Z_{d0}})|_z)^{-1}\left(\mathcal{I}^{(d)}_{p,N,0}|_z\right).
\end{equation}
for every $z\in Z_{d0}$. Note that (\ref{jukdodong}) comes from the fact that $((R_0/\mathfrak{a}_N(\nu_{00}))\otimes \mathcal{O}_{Z_{d0}})/\mathcal{I}^{(d)}_{p,N,0}$ is a vector bundle. Using (\ref{jukdodong}), we obtain
\begin{equation}
\begin{aligned} \label{daeidong}
\tilde{\mathcal{I}}^{(d)}_{p,N,mq}|_{z_{i,d}}&=\pi^{-1}_{mq,N}\left(\frac{\mathfrak{a}_p(\nu_{i0})+\mathfrak{a}_N(\nu_{00})}{\mathfrak{a}_N(\nu_{00})}\right)
\\ &=\left\{f+\mathfrak{a}_{mq}(\nu_{00})\mid f+\mathfrak{a}_N(\nu_{00})\in \frac{\mathfrak{a}_p(\nu_{i0})+\mathfrak{a}_N(\nu_{00})}{\mathfrak{a}_N(\nu_{00})}\right\}
\\ &=\frac{\mathfrak{a}_p(\nu_{i0})+\mathfrak{a}_N(\nu_{00})+\mathfrak{a}_{mq}(\nu_{00})}{\mathfrak{a}_{mq}(\nu_{00})}
\\ &=\frac{\mathfrak{a}_p(\nu_{i0})+\mathfrak{a}_N(\nu_{00})}{\mathfrak{a}_{mq}(\nu_{00})},
\end{aligned}
\end{equation}
and
\begin{equation} \label{yanghakdong}
\tilde{\mathcal{I}}^{(d)}_{p,N,mq}|_{\eta_d}=\frac{J^{(d)}_{p,N,0}+\mathfrak{a}_{mq}(\nu_{00})\otimes_{k_0}K_d}{\mathfrak{a}_{mq}(\nu_{00})\otimes_{k_0}K_d}
\end{equation}
Let us define the multiplicative map of $\mathcal{O}_{Z_{d0}}$-modules
$$ \mu_{p,N,q}:(\tilde{\mathcal{I}}^{(d)}_{p,N,mq})^{\otimes q}\to (R_{0}/\mathfrak{a}_{mq}(\nu_{00}))\otimes \mathcal{O}_{Z_{d0}}.$$
Then, by (\ref{daeidong}) and (\ref{yanghakdong}), we obtain
\begin{equation} \label{seonggok-li}
\begin{aligned}
&\mathrm{Im}((\mu_{p,N,q})|_{z_{i,d}})=\frac{(\mathfrak{a}_{p}(\nu_{i0})+\mathfrak{a}_N(\nu_{00}))^q+\mathfrak{a}_{mq}(\nu_{00})}{\mathfrak{a}_{mq}(\nu_{00})}
\\ & \mathrm{Im}((\mu_{p,N,q})|_{\eta_d})=\frac{\left(J^{(d)}_{p,N,0}\right)^q+\mathfrak{a}_{mq}(\nu_{00})\otimes_{k_0}K_{d}}{\mathfrak{a}_{mq}(\nu_{00})\otimes_{k_0}K_{d}}.
\end{aligned}
\end{equation}
by the fact that $(I/J)^q=\frac{I^q+J}{J}$ for two ideals $J\subseteq I\subseteq R_{0}$. Hence, after replacing $\{\nu_i\}$ once and for all with the diagonal subsequence $\{\nu_{i_j}\}$ and $i_j\in I_j$, the following inequality holds for every $q$ with $mq\ge N$ and $i\ge d$:
$$ 
\begin{aligned}
\ell_R&\left(\frac{(\mathcal{F}^p_{\mathbb{Q}}+\mathfrak{a}_N(\nu_{0}))^q+\mathfrak{a}_{mq}(\nu_{0})}{\mathfrak{a}_{mq}(\nu_{0})}\right) & \\ &=\ell_{R_0\otimes_{k_0}K_{\infty}}\left(\frac{\left(J^{(\infty)}_{p,N,0}\right)^q+\mathfrak{a}_{mq}(\nu_{00})\otimes_{k_0}K_{\infty}}{\mathfrak{a}_{mq}(\nu_{00})\otimes_{k_0}K_{\infty}}\right) & (1)
\\ &=\ell_{R_0\otimes_{k_0}K_d}\left(\frac{\left(J^{(d)}_{p,N,0}\right)^q+\mathfrak{a}_{mq}(\nu_{00})\otimes_{k_0}K_{d}}{\mathfrak{a}_{mq}(\nu_{00})\otimes_{k_0}K_{d}}\right) &
\\ &\ge \ell_{R_0}\left(\frac{(\mathfrak{a}_{p}(\nu_{i0})+\mathfrak{a}_N(\nu_{00}))^q+\mathfrak{a}_{mq}(\nu_{00})}{\mathfrak{a}_{mq}(\nu_{00})}\right) & (2)
\\ &=\ell_R\left(\frac{(\mathfrak{a}_{p}(\nu_{i})+\mathfrak{a}_N(\nu_{0}))^q+\mathfrak{a}_{mq}(\nu_{0})}{\mathfrak{a}_{mq}(\nu_{0})}\right), &
\end{aligned}
$$
where
\begin{itemize}
\item (1) follows from $\frac{\mathcal{F}^{p}_{\Q}+\mathfrak{a}_N(\nu_0)}{\mathfrak{a}_N(\nu_0)}=I^{(\infty)}_{p,N}$, and then $J^{(\infty)}_{p,N,0}\otimes_{K_{\infty}}k=\mathcal{F}^p_{\mathbb{Q}}+\mathfrak{a}_N(\nu_0)$, and
    \item (2) is due to
    $$ \dim_{K_d}\mathrm{Im}\,(\mu_{p,N,q})|_{\eta_d}\ge \dim_{\kappa(z_{i,d})}\mathrm{Im}\,(\mu_{p,N,q})|_{z_{i,d}}$$
    (cf. (\ref{vector}), (\ref{seonggok-li}), and Lemma \ref{duidui}).
\end{itemize}
Hence, by dividing both sides by $\frac{(mq)^n}{n!}$ and taking $\limsup$ of $q\to \infty$, we get
$$ V_m(\mathcal{F}^p+\mathfrak{a}_N(\nu_0))\ge V_m(\mathcal{F}^p_{\mathbb{Q}} +\mathfrak{a}_N(\nu_0))\ge V_m(\mathfrak{a}_{p}(\nu_i)+\mathfrak{a}_N(\nu_0))$$
(where we used the same notation as in Lemma \ref{듸듸<3}). Hence,
$$ V_m(\mathcal{F}^p+\mathfrak{a}_N(\nu_0))\ge \limsup_{i\to \infty}V_m(\mathfrak{a}_{p}(\nu_i)+\mathfrak{a}_N(\nu_0)).$$
Lemma \ref{듸듸<3} implies
\begin{equation} \label{D_m}
\begin{aligned}V_m(\mathcal{F}^p)&=\lim_{N\to \infty}V_m(\mathcal{F}^p+\mathfrak{a}_N(\nu_0))
\\ &\ge \lim_{N\to \infty}\limsup_{i\to \infty}V_m(\mathfrak{a}_{p}(\nu_i)+\mathfrak{a}_N(\nu_0))
\\ &\ge \lim_{N\to \infty}\limsup_{i\to \infty}V_m(\mathfrak{a}_{p}(\nu_i))
\\ &=\limsup_{i\to \infty}V_m(\mathfrak{a}_{p}(\nu_i)).
\end{aligned}
\end{equation}
Since $p\mapsto V_m(\mathcal{F}^p)$ and
$$ p\mapsto \limsup_{i\to \infty}V_m(\mathfrak{a}_{p}(\nu_i)) $$
are nonincreasing, they are continuous almost everywhere, and thus we can easily extend (\ref{D_m}) to almost all real $p$. 

\smallskip

\noindent \textbf{Step 4.} Let us use Xu's quasi-monomiality theorem to conclude the proof. There exists a $\mathbb{T}$-invariant quasi-monomial valuation $\nu'\in \mathrm{Val}^{\mathbb{T}}_X$ that computes $\mathrm{lct}_x(X,\Delta,\mathcal{F}^{\bullet})$ by Lemma \ref{hevensteelyard}. We may assume $\nu'(\mathcal{F}^{\bullet})=1$. Let us define
$$ \overline{S}_m(\nu_0;\mathcal{F}^\bullet):=\frac{n+1}{n}\frac{A_{X,\Delta}(\nu_0)}{\mathrm{vol}(\nu_0)}\int^{\infty}_{0}
V_m(\mathcal{F}^{mt})\,\mathrm{d}t
.$$
Then, for every rational $p$, $\mathcal{F}^p\subseteq \mathfrak{a}_p(\nu')$. Moreover, we can see that
\begin{equation} \label{pohanghang}
\overline{S}_m(\nu_0;\mathcal{F}^{\bullet})\le \overline{S}_m(\nu_0;\nu').
\end{equation}
Furthermore, by (\ref{D_m}) and the reverse Fatou's lemma (see also (\ref{uchang})),
\begin{equation} \label{hyeongsan}
\begin{aligned}
\overline{S}_m(\nu_0;\mathcal{F}^{\bullet})&=\frac{n+1}{n}\frac{A_{X,\Delta}(\nu_0)}{\mathrm{vol}(\nu_0)}\int^{\infty}_{0}V_m(\mathcal{F}^{mt})\,\mathrm{d}t
\\ &\ge \frac{n+1}{n}\frac{A_{X,\Delta}(\nu_0)}{\mathrm{vol}(\nu_0)}\int^{\infty}_{0}\limsup_{i\to \infty}V_m(\mathfrak{a}_{mt}(\nu_i))\,\mathrm{d}t
\\ &\ge \limsup_{i\to \infty}\frac{n+1}{n}\frac{A_{X,\Delta}(\nu_0)}{\mathrm{vol}(\nu_0)}\int^{\infty}_0V_m(\mathfrak{a}_{mt}(\nu_i))\,\mathrm{d}t
\\ &=\limsup_{i\to \infty} \overline{S}_m(\nu_0;\nu_i).
\end{aligned}
\end{equation}
By (\ref{pohanghang}), (\ref{hyeongsan}), and the uniform Fujita approximation Theorem \ref{theorem},
$$ S(\nu_0;\nu')\ge \limsup_{i\to \infty} S(\nu_0;\nu_i)-\frac{C}{m}=\frac{1}{\delta(X,\Delta;\nu_0)}-\frac{C}{m}.$$
Hence, by letting $m$ approach infinity, $S(\nu_0;\nu')\ge \frac{1}{\delta(X,\Delta;\nu_0)}$. Let us prove $\mathrm{lct}_x(X,\Delta,\mathcal{F}^{\bullet})\le 1$. Granted this claim, we obtain that $A_{X,\Delta}(\nu')\le 1$, and thus $\frac{A_{X,\Delta}(\nu')}{S(\nu_0,\nu')}\le \delta(X,\Delta;\nu_0)$, which completes the proof of the theorem.

\smallskip

Note that
$$ \mathrm{lct}_x(X,\Delta,\mathfrak{a}_{p}(\nu_i))\le \frac{A_{X,\Delta}(\nu_i)}{\nu_i(\mathfrak{a}_{p}(\nu_i))}\le \frac{1}{p},$$
and thus by (\ref{yugang}) and \cite[Lemma 3.14]{BLXZ25},
\begin{equation} \label{jungdan}
\begin{aligned}
\mathrm{lct}_x&(X,\Delta,\mathfrak{a}_{p}(\nu_i)+\mathfrak{a}_N(\nu_0))
\\ &\le \mathrm{lct}_x(X,\Delta,\mathfrak{a}_{p}(\nu_i))+\mathrm{lct}_x(X,\Delta,\mathfrak{a}_N(\nu_0))
\\ &\le \frac{1}{p}+\frac{bC_x}{N},
\end{aligned}
\end{equation}
where $C_x:=\mathrm{lct}_x(X,\Delta,\mathfrak{m}_x)$. Using \cite[Lemma 2.14 iii)]{dFEM11} twice, the lower-semicontinuity of log canonical thresholds (cf. \cite[Proposition 10(2)]{BL21}) and (\ref{jungdan}) to 
$$(X_{0}\times Z_{d0},\Delta_{0}\times Z_{d0},\mathcal{J}^{(d)}_{r,N,0})\to Z_{d0},$$ where
$$ \mathcal{J}^{(d)}_{r,N,0}:=\ker\left(R_{0}\otimes_{k_0}\mathcal{O}_{Z_{d0}}\to ((R_{0}/\mathfrak{a}_N(\nu_{00}))\otimes_{k_0} \mathcal{O}_{Z_{d0}})/(\mathcal{I}^{(d)}_{r,N,0})\right),$$
allows us to obtain
$$\mathrm{lct}_x(X,\Delta,\mathcal{F}^{p}+\mathfrak{a}_N(\nu_0)) \le \mathrm{lct}_x(X,\Delta,\mathcal{F}^{p'}_{\mathbb{Q}}+\mathfrak{a}_N(\nu_0))\le \frac{1}{p'}+\frac{bC_x}{N}.$$
for every $p'<p$ (after shrinking, we may assume that $Z_{d0}$ are normal). Thus,
$$ \mathrm{lct}_x(X,\Delta,\mathcal{F}^{\bullet})=\sup_{p\ge 1}p\cdot \mathrm{lct}_x(X,\Delta,\mathcal{F}^p)\le 1.$$
We complete the proof.
\end{proof}

\begin{remark}
One may think we can prove Theorem \ref{누영이} in the countable setting using the argument of \cite{KL26}. A difficulty in importing the argument of \cite{KL26} to extend Theorem \ref{누영이} is that we do not know how to prove an analog of \cite[Lemma 3.1]{KL26} in our setting. A main problem is that we do not know how to ensure the flatness of $\frac{\mathfrak{a}_{r}(\nu)+\mathfrak{a}_r(\nu_0)}{\mathfrak{a}_r(\nu_0)}$ simultaneously for $r$.

\smallskip

Note that our generic limit argument works for every klt pair $(X,\Delta)$ of a \emph{complete} local scheme (with a natural definition of local $S$-invariant) without being a log Fano cone singularity. As explained in the introduction, our generic limit argument stops working if we omit the completeness condition (cf. Example \ref{chatchat}).
\end{remark}

\section{Questions} \label{4}
Let us pose a question.

\begin{question} \label{chatchatchat}
Let $x\in (X,\Delta)$ be a klt pair of a local scheme essentially of finite type over an algebraically closed field of characteristic $0$, $\nu_0$ a non-zero valuation over $X$ whose center is $x$, and let us define
$$ \delta(X,\Delta;\nu_0):=\inf_{\nu\in \mathrm{Val}^*_{X,\ni x}}\frac{A_{X,\Delta}(\nu)}{S(\nu_0;\nu)}.$$
Can we find any minimizer of $\delta(X,\Delta;\nu_0)$?
\end{question}

We can conclude that the answer is \emph{no}.

\begin{example} \label{chatchat}
Let us denote the local $S$-invariant of $(X,\Delta)$ by $S_{X,\Delta}(\nu_0;\nu)$.

\smallskip

Let $A:=k[s,t]_{(s,t)}$, $X:=\Spec A$, $f=t^2-s^2(1+s)$, $C=(f=0)$, $\Delta:=\frac{1}{2}C$. Then, $C$ has a node at $x$. Indeed, let $B=(A[u]/(u^2-(1+s)))_{(s,t,u-1)}$. Then,
$$ \pi:(Y,\tilde{x}):=(\Spec S,(s,t,u-1))\to (X,x)$$
is an étale morphism.

\smallskip

Now, set
$$ s':=t-su, \,\,\, t':=t+su.$$
Then, $s',t'$ are regular parameters of $\mathcal{O}_{Y,\tilde{x}}$, and $f=s't'$. Thus, if we let $\Delta_Y:=\pi^*\Delta$, then $(Y,\Delta_Y)$ is klt, and thus $x\in (X,\Delta)$ is a klt germ (cf. \cite[Proposition 2.24]{HLQ23}).

\smallskip

Let us consider the monomial valuation $\nu'_0:=\mathrm{wt}_{1,2}$, and let $\nu_0:=\nu'_0|_{K(X)}$. Then, the log discrepancy is $A_{Y,\Delta_Y}(\nu'_0)=\frac{3}{2}$, and $A_{X,\Delta}(\nu_0)=\frac{3}{2}$.

Note that for $B:=\mathcal{O}_{Y,\tilde{x}}$,
$$ \widehat{B}=k[[s',t']],$$
and
$$ \mathrm{wt}_{(a,b)}\left(\sum_{i,j}c_{ij}(s')^i(t')^j\right)=\min\{ai+bj\mid c_{ij}\ne 0\}.$$
Hence,
$$ \mathfrak{a}_{r}(\nu'_0)\widehat{B}=((s')^i(t')^j\mid i+2j\ge r)$$
and
$$ \mathfrak{a}_{r\lambda}(w)\widehat{B}=((s')^i(t')^j\mid ai+bj\ge r\lambda). $$
For a positive integer $m$, let $N_m:=\ell_B(B/\mathfrak{a}_m(\nu'_0))$. Then,
$$ N_m=|\{(i,j)\in \Z^2_{\ge 0}\mid i+2j<m\}|,$$
and hence $N_m=\frac{m^2}{4}+O(m)$. Thus, $\mathrm{vol}(\nu_0)=\frac{1}{2}$.

\smallskip

Let
$$ L_m(\lambda):=\ell_B\left(\frac{\mathfrak{a}_{m\lambda}(w)+a_m(\nu'_0)}{\mathfrak{a}_m(\nu'_0)}\right)$$
for $w=\mathrm{wt}_{(a,b)}$. Then,
$$ L_m(\lambda)=|\{(i,j)\in \Z^2_{\ge 0}\mid i+2j<m,\,ai+bj\ge m\lambda\}|.$$
Let
$$P:=\{(x,y)\in \R^2_{\ge 0}\mid x+2y<1\},$$
and
$$ P_{\lambda}:=\{(x,y)\in P\mid ax+by\ge \lambda\}.$$
Then, $\frac{L_m(\lambda)}{m^2}\to \mathrm{Area}(P_{\lambda})$. Hence,
$$ \mathrm{vol}\left(\nu_0;\frac{w}{\lambda}\right)=2\mathrm{Area}(P_{\lambda}).$$

\smallskip

One can compute
$$ \begin{aligned}
\int^{\infty}_0L_m(\lambda)\,\mathrm{d}\lambda&=\int^{\infty}_0\sum_{\substack{i,j\ge 0 \\ i+2j<m}}1_{\{ai+bj\ge m\lambda\}}\,\mathrm{d}\lambda
\\ &=\sum_{\substack{i,j\ge 0 \\ i+2j<m}}\int^{\infty}_01_{\left\{0\le \lambda\le \frac{ai+bj}{m}\right\}}\,\mathrm{d}\lambda
\\ &=\frac{1}{m}\sum_{\substack{i,j\ge 0 \\ i+2j<m}}(ai+bj).
\end{aligned}$$
Hence,
$$ \int^{\infty}_{0}\frac{L_m(\lambda)}{\frac{m^2}{2}}\,\mathrm{d}\lambda=\frac{2}{m^3}\sum_{i+2j<m}(ai+bj).$$
The function inside the integral is $L^1$. Hence, dominated convergence theorem gives
$$
\begin{aligned}
\int^{\infty}_0\mathrm{vol}\left(\nu_0;\frac{w}{\lambda}\right)\,\mathrm{d}\lambda&=\lim_{m\to \infty}\int^{\infty}_0\frac{L_m(\lambda)}{\frac{m^2}{2}}\,\mathrm{d}\lambda
\\ &=\lim_{m\to \infty}\frac{2}{m^3}\sum_{i+2j<m}(ai+bj).
\end{aligned}
$$
Moreover, $ \mathrm{vol}(\nu_0)=\lim_{m\to \infty}\frac{N_m}{\frac{m^2}{2}}.$ Thus,
$$
\frac{1}{\mathrm{vol}(\nu_0)}\int^{\infty}_0\mathrm{vol}\left(\nu_0;\frac{w}{\lambda}\right)\,\mathrm{d}\lambda=\lim_{m\to \infty}\frac{1}{mN_m}\sum_{i+2j<m}(ai+bj).
$$
Set
$$A_m:=\sum_{\substack{i,j\ge0\\i+2j<m}}i,\,\,\,
B_m:=\sum_{\substack{i,j\ge0\\i+2j<m}}j.$$
Then,
$$\sum_{i+2j<m}(ai+bj)=aA_m+bB_m.$$
Moreover, we can compute
$$ A_m=\sum^{\floor{\frac{m-1}{2}}}_{j=0}\frac{(m-2j-1)(m-2j)}{2}=\frac{m^3}{12}+O(m^2).$$
Similarly, $B_m=\frac{m^3}{24}+O(m^2)$. Thus,
$$ \lim_{m\to \infty}\frac{1}{mN_m}\sum_{\substack{i,j\ge 0 \\ i+2j<m}}(ai+bj)=\frac{a}{3}+\frac{b}{6}.$$
Finally, we obtain that
\begin{equation} \label{gpt1}
S_{Y,\Delta_Y}(\mathrm{wt}_{(1,2)},\mathrm{wt}_{(a,b)})=\frac{3}{4}\left(a+\frac{b}{2}\right)
\end{equation}
for every $a,b\ge 0$.

\smallskip

Let $w_t:=\mathrm{wt}_{(1,t)}$. Then, $A_{Y,\Delta_Y}(w_t)=\frac{1+t}{2}$ and $S(\mathrm{wt}_{(1,2)};w_t)=\frac{3}{4}\left(1+\frac{t}{2}\right)$. Therefore,
$$ \frac{A_{Y,\Delta_Y}(w_t)}{S(\mathrm{wt}_{(1,2)};w_t)}\to \frac{2}{3}\text{ as }t\to 0^+.$$
Let $q_t:=w_t|_{K(X)}$. By Lemma \ref{HLQ}, we obtain that
\begin{equation} \label{gpt2}
A_{X,\Delta}(q_t)=A_{Y,\Delta_Y}(w_t),\,\,\, S(\nu_0;q_t)=S(\mathrm{wt}_{(1,2)};w_t).
\end{equation}
Hence, $\delta(X,\Delta;\nu_0)\le \frac{2}{3}$.

\smallskip

Let $S:=(s'=0), T:=(t'=0)$, and define
$$ \Delta^*:=\frac{1}{2}S+\frac{3}{4}T.$$
Then, $A_{Y,\Delta^*}(\mathrm{wt}_{(1,2)})=1$. Think about the Kummer cover $\phi_{\A}:\A^2_{s'',t''}\to \A^2_{s',t'}$, $s'=(s'')^2,t'=(t'')^4$. Let
$$ B':=(B[s'',t'']/((s'')^2-s',(t'')^4-t')_{\mathfrak{m}'},$$
where $\mathfrak{m}'=(s'',t'')\subseteq B[s'',t'']/((s'')^2-s',(t'')^4-t')$. Let $(Y',y'):=(\Spec B',\mathfrak{m}')$. Then, we can consider $\phi:(Y',y')\to (Y,\tilde{x})$. It has degree $8$ as a finite Galois cover, $\mathrm{Ram}(\phi)=(s''=0)+3(t''=0)$, and $\phi^*\Delta^*=(s''=0)+3(t''=0)=\mathrm{Ram}(\phi)$. Hence,
$$ K_{Y'}=\phi^*(K_Y+\Delta^*).$$
We can compute $\widehat{\mathrm{vol}}(\tilde{x},Y,\Delta^*)=\frac{1}{2}$ by \cite[Theorem 1.6]{LX19} and \cite[Theorem 1.3]{XZ20}. Moreover, $\mathrm{vol}(\mathrm{wt}_{(1,2)})=\frac{1}{2}$. Thus, $\mathrm{wt}_{(1,2)}$ is the normalized-volume minimizer and K-semistable valuation (cf. \cite[Theorem 3.10]{XZ20}). Therefore, for every $w\in \mathrm{Val}^*_{Y,\tilde{x}}$,
$$ A_{Y,\Delta^*}(w)\ge S_{Y,\Delta^*}(\mathrm{wt}_{(1,2)},w).$$
Moreover,
$$ S_{Y,\Delta^*}(\mathrm{wt}_{(1,2)};w)=\frac{A_{Y,\Delta^*}(\mathrm{wt}_{(1,2)})}{A_{Y,\Delta_Y}(\mathrm{wt}_{(1,2)})}S_{Y,\Delta_Y}(\mathrm{wt}_{(1,2)};w)=\frac{2}{3}S_{Y,\Delta_Y}(\mathrm{wt}_{(1,2)};w).$$
Furthermore,
$$ A_{Y,\Delta_Y}(w)=A_{Y,\Delta^*}(w)+\frac{1}{4}w(t').$$
If $w$ is centered on $\tilde{x}$, then $w(t')>0$, and therefore
$$ \frac{A_{Y,\Delta_Y}(w)}{S_{Y,\Delta_Y}(\mathrm{wt}_{(1,2)};w)}>\frac{2}{3}.$$
By Lemma \ref{HLQ}, we obtain that no $x$-centered valuation $q$ satisfies
$$ \frac{A_{X,\Delta}(q)}{S_{X,\Delta}(\nu_0;q)}\le \frac{2}{3}.$$

\smallskip

Since $X$ is a regular surface, every non-closed center of a valuation is a prime curve $D$ on $X$, and moreover, every valuation centered on $D$ is a form of $\lambda\cdot \mathrm{ord}_D$ for $\lambda>0$.

\smallskip

Let $D=(h=0)$, and let $a:=\nu_0(h)$. then,
$$ S_{X,\Delta}(\nu_0;\mathrm{ord}_D)=\frac{A_{X,\Delta}(\nu_0)}{2\nu_0(h)}=\frac{3}{4a}.$$
Indeed,
$$ \mathcal{F}^q_{\mathrm{ord}_D}(R/\mathfrak{a}_m(\nu_0))=\frac{(h^q)+\mathfrak{a}_m(\nu_0)}{\mathfrak{a}_m(\nu_0)},$$
and
$$ (\mathfrak{a}_m(\nu_0):h^q)=\mathfrak{a}_{m-qa}(\nu_0).$$
Hence,
$$ \mathrm{vol}\left(\nu_0;\frac{\mathrm{ord}_D}{t}\right)=\mathrm{vol}(\nu_0)(1-at)^2_+,$$
and
$$ S_{X,\Delta}(\nu_0;\mathrm{ord}_D)=\frac{3}{2}\frac{A_{X,\Delta}(\nu_0)}{\mathrm{vol}(\nu_0)}\int^{\infty}_0\mathrm{vol}(\nu_0)(1-at)^2_+ \,\mathrm{d}t=\frac{A_{X,\Delta}(\nu_0)}{2a}.$$
\smallskip

\noindent\textbf{Case 1.} $D=C$.

\smallskip

In this case, $h=f$, and $\frac{A_{X,\Delta}(\mathrm{ord}_C)}{S_{X,\Delta}(\nu_0;\mathrm{ord}_C)}=2$.

\smallskip

\noindent\textbf{Case 2.} $D\ne C$.

\smallskip

In this case, $A_{X,\Delta}(\mathrm{ord}_D)=1$. Moreover, since $h\in \mathfrak{m}_x$, $\nu_0(h)\ge \nu_0(\mathfrak{m}_x)\ge 1$. Hence,
$$ \frac{A_{X,\Delta}(\mathrm{ord}_D)}{S_{X,\Delta}(\nu_0;\mathrm{ord}_D)}=\frac{4\nu_0(h)}{3}\ge \frac{4}{3}.$$
Hence, every ratio $A/S$ of a nonclosed centered valuation is larger than $\frac{2}{3}$.

\smallskip

Therefore, $\delta(X,\Delta,\nu_0)=\frac{2}{3}$, and no one valuation $q$ over $X$ satisfies $\frac{A_{X,\Delta}(q)}{S_{X,\Delta}(\nu_0;q)}=\frac{2}{3}$.
\end{example}

In this situation, there is an étale cover $(Y,\Delta_Y)\to (X,\Delta)$ for which $\delta(Y,\Delta_Y,\nu_0)$ attains its minimum. This leads us to the following question.

\begin{question}
Let $x\in (X,\Delta)$ be the henselization of a klt pair of a local scheme essentially of finite type over an algebraically closed field of characteristic $0$, and $\nu_0$ a non-zero valuation over $X$ whose center is $x$. Can we find any minimizer of $\delta(X,\Delta;\nu_0)$?
\end{question}

A main motivation for finding a minimizer of an invariant is to prove a kind of \emph{stable degeneration theorem} (cf. \cite{Blu18, Xu20, XZ20, XZ25}). Hence, it is natural to ask whether any minimizer has the finitely generated Rees algebra.

\begin{question}
Let $x\in (X:=\Spec R,\Delta,\mathbb{T})$ be a log Fano cone singularity, and let $\nu_0\in \mathrm{Val}^{\mathbb{T},*}_{X,x}$ be a $\mathbb{T}$-invariant valuation with $A_{X,\Delta}(\nu_0)<\infty$. Let $\nu\in \mathrm{Val}^{\mathbb{T},*}_{X,\ni x}$ be a computing valuation of $\delta(X,\Delta;\nu_0)$.

\smallskip

Can we say that $\mathrm{gr}_{\nu}R$ is a finitely generated $k$-algebra?
\end{question}

The answer is \emph{yes} in the case where $\nu_0=\mathrm{wt}_{\xi}$ for some Reeb vector $\xi\in \mathfrak{t}^+_{\R}$, or when $\nu$ is the minimizer of the normalized local volume $\widehat{\mathrm{vol}}(x,X,\Delta)$ (see \cite{Hua22, XZ25}). We also remark that related questions appear in \cite[Question 5.3]{BLXZ23} and \cite[Theorem 6.4]{HMQWZ26}.

\bibliographystyle{halpha}
\bibliography{biblio}

\end{document}